\numberwithin{equation}{section}
\theoremstyle{plain}
\newtheorem{theorem}{Theorem}[section]
\newtheorem{prop}[theorem]{Property}
\newtheorem{cor}[theorem]{Corollary}
\newtheorem{definition}[theorem]{Definition}
\newtheorem{remark}[theorem]{Remark}
\begin{document}

\title[Sobolev, Hardy, Gagliardo-Nirenberg and Caffarelli-Kohn-Nirenberg]{Sobolev, Hardy, Gagliardo-Nirenberg and Caffarelli-Kohn-Nirenberg type inequalities for some fractional derivatives}

\author[A. Kassymov]{Aidyn Kassymov}
\address{
   Aidyn Kassymov:
  \endgraf
  Department of Mathematics: Analysis, Logic and Discrete Mathematics
  \endgraf
  Ghent University, Belgium
  \endgraf
   and
  \endgraf
  Al-Farabi Kazakh National University
  \endgraf
  Almaty, Kazakhstan
  \endgraf
  and
  \endgraf
  Institute of Mathematics and Mathematical Modeling
  \endgraf
  Almaty, Kazakhstan
  \endgraf
  {\it E-mail address} {\rm kassymov@math.kz} and {\rm aidyn.kassymov@ugent.be}
}
  \author[M. Ruzhansky]{Michael Ruzhansky}
\address{
	Michael Ruzhansky:
	 \endgraf
  Department of Mathematics: Analysis, Logic and Discrete Mathematics
  \endgraf
  Ghent University, Belgium
  \endgraf
  and
  \endgraf
  School of Mathematical Sciences
    \endgraf
    Queen Mary University of London
  \endgraf
  United Kingdom
	\endgraf
  {\it E-mail address} {\rm michael.ruzhansky@ugent.be}
}

\author[N. Tokmagambetov]{Niyaz Tokmagambetov}
\address{
  Niyaz Tokmagambetov:
    \endgraf
  Department of Mathematics: Analysis, Logic and Discrete Mathematics
  \endgraf
  Ghent University, Belgium
  \endgraf
   and
  \endgraf
  Al-Farabi Kazakh National University
  \endgraf
  Almaty, Kazakhstan
  \endgraf
  {\it E-mail address} {\rm niyaz.tokmagambetov@ugent.be}
 }

\author[B. T. Torebek]{Berikbol T. Torebek}
\address{
  Berikbol T. Torebek:
    \endgraf
  Department of Mathematics: Analysis, Logic and Discrete Mathematics
  \endgraf
  Ghent University, Belgium
  \endgraf
   and
  \endgraf
  Institute of Mathematics and Mathematical Modeling
  \endgraf
  Almaty, Kazakhstan
  \endgraf
  {\it E-mail address} {\rm berikbol.torebek@ugent.be}
  }

\thanks{The authors were supported in parts by the FWO Odysseus 1 grant G.0H94.18N: Analysis and Partial Differential Equations. Michael Ruzhansky was supported in parts by the EPSRC Grant EP/R003025/1, by the Leverhulme Research Grant RPG-2017-151. Aidyn Kassymov was supported in parts by the MESRK Grant AP08053051 of the Ministry of Education and Science of the Republic of Kazakhstan}

\keywords{Sobolev inequality, Hardy inequality, Gagliardo-Nirenberg inequality,   Caffarelli-Kohn-Nirenberg inequality, fractional order differential operator, Caputo derivative, Riemann-Liouville derivative, Hadamard derivative.}
\subjclass[2010]{26D10, 45J05.}

\begin{abstract}
In this paper we show different inequalities for fractional order differential operators. In particular, the Sobolev, Hardy, Gagliardo-Nirenberg and Caffarelli-Kohn-Nirenberg type inequalities for the Caputo, Riemann-Liouville and Hadamard derivatives are obtained. In addition, we show some applications of these inequalities.
\end{abstract}

\maketitle

\tableofcontents

\section{Introduction}
There is no doubt that the inequalities not depending on a type of operators are very powerful for integral and differential equations. Without them, the progress of integro-differential equations would not be at its present level. Fractional order differential operators are not an exception.

Let us recall some classical results. Let $\Omega\subset\mathbb{R}^{N}$ be a measurable set and let $1<p<N$, then the classical Sobolev inequality is formulated as
\begin{equation}\label{cl-Sobolev}
\|u \|_{L^{p^*}(\Omega)}\leq C\|\nabla u \|_{L^{p}(\Omega)},\,\,\,u\in C^{\infty}_{0}(\Omega),
\end{equation}
where $C=C(N,p)>0$ is a positive constant, $p^{*}=\frac{Np}{N-p}$ and $ \nabla$ is the standard gradient in $\mathbb{R}^{N}$.  The inequality \eqref{cl-Sobolev} is one of the most important tools in PDEs and variational problems.

Further generalizations of the Sobolev inequality were obtained by Gagliardo and Nirenberg, independently.
In \cite{Gag} and \cite{Nir} they independently from each other proved the interpolation inequality
\begin{equation}\label{cl-GN}
 \|u\|^{p}_{L^{p}(\mathbb{R}^{N})}\leq C \|\nabla u\|^{N(p-2)/2}_{L^{2}(\mathbb{R}^{N})}\|u\|^{(2p-N(p-2))/2}_{L^{2}(\mathbb{R}^{N})},\,\,u\in H^{1}(\mathbb{R}^{N}),
\end{equation}
where \begin{equation*}
 \begin{cases}
   2\leq p\leq\infty\,\,\text{for}\,\, N = 2,\\
   2\leq p \leq \frac{2N}{N-2}\,\,\text{for} \,\,N > 2.
 \end{cases}
\end{equation*}
Now, it is called the Gagliardo--Nirenberg inequality.

The next important generalization of the Sobolev inequality is the Caffarelli--Kohn--Nirenberg inequality. In 1984, Caffarelli, Kohn and Nirenberg \cite{CKN} established the following result:
\begin{theorem}
Let $N\geq1$. Assume that $l_1$, $l_2$, $l_3$, $a, \, b, \, d,\, \delta \in \mathbb{R}$ be such that $l_1, l_2 \geq 1$,
$l_3 > 0, \,\,0 \leq \delta \leq 1,$ and
\begin{equation*}
\frac{1}{l_1}+\frac{a}{N},\,\,\,\frac{1}{l_2}+\frac{b}{N},\,\,\,\frac{1}{l_3}+\frac{\delta d+(1-\delta) b}{N}>0.
\end{equation*}
Then,
\begin{equation}\label{cl-CKN}
\||x|^{\delta d+(1-\delta) b}u\|_{L^{l_{3}}(\mathbb{R}^{N})}\leq C\||x|^{a}\nabla u\|^{\delta}_{L^{l_{1}}(\mathbb{R}^{N})}\||x|^{b} u\|^{1-\delta}_{L^{l_{2}}(\mathbb{R}^{N})},\,\,\,u\in C^{\infty}_{c}(\mathbb{R}^{N}),
\end{equation}
if and only if
\begin{multline*}
\frac{1}{l_3}+\frac{\delta d+(1-\delta) b}{N}=\delta\left(\frac{1}{l_{1}}+\frac{a-1}{N}\right)+
(1-\delta)\left(\frac{1}{l_2}+\frac{b}{N}\right),\\
a-d\geq0,\,\,\,\,\text{if}\,\,\,\delta>0,\\
a-d\leq1,\,\,\,\,\text{if}\,\,\,\delta>0\,\,\,\text{and}\,\,\,\frac{1}{l_3}+\frac{\delta d+(1-\delta) b}{N}=\frac{1}{l_1}+\frac{a-1}{N},
\end{multline*}
where $C$ is a positive constant independent of $u$.
\end{theorem}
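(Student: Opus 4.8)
The plan is to prove the two implications separately, reading the dimensional balance equation as a manifestation of scale invariance and the two sign conditions as admissibility constraints for a single weighted Sobolev endpoint. For necessity I would insert the scaled function $u_\lambda(x)=u(\lambda x)$ into \eqref{cl-CKN} and track the power of $\lambda$ produced by each weighted norm: a change of variables gives $\||x|^{c}u_\lambda\|_{L^{r}}=\lambda^{-c-N/r}\||x|^{c}u\|_{L^{r}}$, while $\nabla u_\lambda=\lambda(\nabla u)(\lambda\cdot)$ contributes one extra factor of $\lambda$. Matching the exponents on the two sides, which is forced since the inequality must survive as $\lambda\to 0$ and $\lambda\to\infty$, yields exactly
\[
\frac{1}{l_3}+\frac{\delta d+(1-\delta) b}{N}=\delta\left(\frac{1}{l_{1}}+\frac{a-1}{N}\right)+(1-\delta)\left(\frac{1}{l_2}+\frac{b}{N}\right).
\]
The remaining conditions $a-d\ge0$, and $a-d\le1$ in the critical case, I would extract by testing \eqref{cl-CKN} against families of functions that concentrate at the origin or spread to infinity, chosen so that $\||x|^{a}\nabla u\|_{L^{l_1}}$ and $\||x|^{b}u\|_{L^{l_2}}$ stay bounded while $\||x|^{\delta d+(1-\delta)b}u\|_{L^{l_3}}$ diverges unless the sign constraints hold.

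For sufficiency the heart of the argument is the single endpoint $\delta=1$, namely the weighted Sobolev inequality
\[
\||x|^{d}u\|_{L^{q}(\mathbb{R}^{N})}\le C\||x|^{a}\nabla u\|_{L^{l_1}(\mathbb{R}^{N})},\qquad \frac{1}{q}=\frac{1}{l_1}+\frac{a-1-d}{N}.
\]
I would prove this by representing $u$ through its gradient via the Sobolev potential formula $u(x)=c_N\int_{\mathbb{R}^{N}}\frac{(x-y)\cdot\nabla u(y)}{|x-y|^{N}}\,dy$, so that $|u(x)|\lesssim (I_1|\nabla u|)(x)$ with $I_1$ the Riesz potential of order one; the estimate then reduces to the weighted boundedness of $I_1$ from $L^{l_1}(|x|^{al_1}dx)$ to $L^{q}(|x|^{dq}dx)$, which is precisely a Stein--Weiss fractional integral inequality (equivalently, after passing to polar coordinates and writing $u(r\omega)=-\int_r^\infty\partial_s u(s\omega)\,ds$ with $|\partial_s u|\le|\nabla u|$, a one-dimensional weighted Hardy inequality in the radial variable). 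The admissibility range of this inequality is exactly where the two sign conditions enter: $a-d\ge0$ controls the weight near the origin, while $a-d\le1$ is the borderline needed in the critical case $\frac{1}{l_3}+\frac{\delta d+(1-\delta)b}{N}=\frac{1}{l_1}+\frac{a-1}{N}$.

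With the endpoint in hand, the general $\delta\in(0,1)$ case follows by Hölder interpolation. One starts from the pointwise identity
\[
|x|^{l_3(\delta d+(1-\delta) b)}|u|^{l_3}=\left(|x|^{d}|u|\right)^{\delta l_3}\left(|x|^{b}|u|\right)^{(1-\delta) l_3},
\]
which holds for all $d,b$ since the weight and the power of $|u|$ both balance, and applies Hölder's inequality with conjugate exponents $p,p'$ determined by $\frac1p=\frac{\delta l_3}{q}$ and $\frac1{p'}=\frac{(1-\delta)l_3}{l_2}$. A direct computation shows that $\frac1p+\frac1{p'}=1$ is equivalent to $\frac{\delta}{q}+\frac{1-\delta}{l_2}=\frac{1}{l_3}$, which, combined with the endpoint relation $\frac1q=\frac1{l_1}+\frac{a-1-d}{N}$, is exactly the dimensional balance equation above. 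The two factors then become $\||x|^{d}u\|_{L^{q}}^{\delta l_3}$ and $\||x|^{b}u\|_{L^{l_2}}^{(1-\delta)l_3}$; applying the weighted Sobolev endpoint to the first and taking $l_3$-th roots yields \eqref{cl-CKN}.

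The main obstacle is the weighted Sobolev endpoint itself. The interpolation step is essentially algebraic once the factorization and the balance of exponents are observed, but establishing the endpoint for general (non-radial) $u$ requires the sharp weighted mapping property of the Riesz potential and careful bookkeeping of the admissible exponent windows in $a$ and $d$; this is precisely the place where the conditions $a-d\ge0$ and $a-d\le1$ become indispensable rather than cosmetic, and where any failure of the hypotheses immediately breaks the argument.
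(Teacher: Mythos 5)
First, a remark on provenance: the paper does not prove this theorem at all --- it is quoted verbatim from Caffarelli--Kohn--Nirenberg \cite{CKN} as background in the introduction --- so there is no internal proof to compare against; your proposal has to be judged on its own. Your necessity sketch (scaling $u_\lambda$ for the balance identity; translated or concentrated bumps for the sign conditions) is the standard and correct route, though the test-function computations giving $a-d\ge 0$ and $a-d\le 1$ are asserted rather than carried out. The Hölder/interpolation algebra in the sufficiency part is also correct as algebra.

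The genuine gap is in the sufficiency direction: you funnel the entire proof through the single endpoint $\||x|^{d}u\|_{L^{q}}\le C\||x|^{a}\nabla u\|_{L^{l_1}}$ with $\frac{1}{q}=\frac{1}{l_1}+\frac{a-1-d}{N}$, but this endpoint does not exist on the full parameter range the theorem covers. The constraint $a-d\le 1$ is imposed \emph{only} in the critical case $\frac{1}{l_3}+\frac{\delta d+(1-\delta)b}{N}=\frac{1}{l_1}+\frac{a-1}{N}$; off the critical line with $\delta\in(0,1)$ the theorem allows $a-d>1$, in which case $\frac{1}{q}>\frac{1}{l_1}$ and the would-be endpoint sits outside the Stein--Weiss range $q\ge l_1$. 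Worse, $\frac{1}{q}$ can vanish or become negative inside the admissible region (take $a-d=1$ and $l_1\ge N$), so the object you propose to interpolate from is not even a Lebesgue exponent; and even when $q\ge 1$, Stein--Weiss additionally needs $-d<N/q$ and $a<N/l_1'$, neither of which is among the hypotheses. This is why the actual proof of the non-critical case does not pass through any single weighted Sobolev inequality: one decomposes $\mathbb{R}^{N}$ into dyadic annuli $\{2^{j}\le|x|\le 2^{j+1}\}$, applies the rescaled \emph{unweighted} Gagliardo--Nirenberg inequality on each annulus (where every weight is comparable to a power of $2^{j}$), and sums over $j$, with strict non-criticality providing the convergent geometric series; the critical case is then handled separately and is exactly where $a-d\le 1$ enters. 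Your scheme cannot be repaired by bookkeeping alone --- it is structurally confined to the subrange $0\le a-d\le 1$ with $\frac{1}{l_1}+\frac{a-1}{N}>0$, which is strictly smaller than what \eqref{cl-CKN} asserts.
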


Recently, mathematicians started to develop the classical inequalities \eqref{cl-Sobolev}, \eqref{cl-GN}, and \eqref{cl-CKN} for the $p$-Laplacian operator. In \cite{DPV}, Nezza, Palatucci and Valdinoci obtained the $p$-Laplacian version of the Sobolev inequality
\begin{equation}
\|u \|_{L^{p^*}(\mathbb{R}^{N})}\leq C [u]_{s,p},
\end{equation}
for the parameters $N>sp$, $1<p<\infty,$ and $s\in(0,1)$, for any measurable and compactly supported function $u$. Here, $C=C(N,p,s)>0$ is a suitable constant, and $[u]_{s,p}$ defined by
$$
[u]_{s,p}^{p}=\int_{\mathbb{R}^{N}}\int_{\mathbb{R}^{N}}
\frac{|u(x)-u(y)|^{p}}{|x-y|^{N+sp}}dxdy
$$
is the Gagliardo seminorm and $p^*=\frac{Np}{N-sp}$.

By using different techniques, the authors of the papers \cite{41, DLL, NS1} proved the Gagliardo-Nirenberg inequality for the $p$--Laplacian operator:
\begin{equation}
\|u\|_{L^{\tau}(\mathbb{R}^{N})}\leq C[u]^{a}_{s,p}\|u\|^{1-a}_{L^{\alpha}(\mathbb{R}^{N})},\,\,\forall u\in C^{1}_{c}(\mathbb{R}^{N}),
\end{equation}
 for $N\geq1,\,\,s\in(0,1),\,\,p>1,\,\,\alpha\geq1,\,\,\tau>0,$ and $a\in(0,1]$ such that
\begin{equation*}
\frac{1}{\tau}=a\left(\frac{1}{p}-\frac{s}{N}\right)+\frac{1-a}{\alpha}.
\end{equation*}

In \cite{27, 26} Hughes derived a Hardy-Landau-Littlewood
inequality \cite{24} for the Riemann-Liouville fractional integral, then for the Riemann-Liouville fractional
derivatives in weighted $L^p$ spaces. For more information about inequalities related to the fractional order operators, the reader is referred to \cite{AAK17} and references therein.

In this paper we deal with new inequalities related to some fractional order differential operators. Especially, the Caputo derivative analogues of the above inequalities are in the field of our interest. Here, we derive  the generalizations of the classical Sobolev, Hardy, Gagliardo-Nirenberg and Caffarelli-Kohn-Nirenberg inequalities. Note that in this direction systematic studies of different functional inequalities on general homogeneous (Lie) groups were initiated by the book
\cite{RSbook}.

Recently, more attention has been paid to the study of fractional analogues of known functional inequalities (see e.g. \cite{AAK17, An08, An09, An11, Iqbal}). Also, we note that in \cite{An09}, the author considered Sobolev-type inequality for the Caputo and Riemann-Liouville derivatives of order $\alpha\geq 1.$

We start by compiling basic definitions of fractional differential operators.

\section{Preliminaries}
Let us recall the Riemann--Liouville
fractional integrals and derivatives. Also, we give definitions of the Caputo fractional derivatives. In \cite[p.394]{1} the sequential differentiation was formulated in a way that we will use in the further investigations. We refer to \cite{samko, 1} and references therein for further properties.

\begin{definition} The left Riemann--Liouville
fractional integral $I_{a+} ^\alpha$ of order $\alpha>0$, and derivative $D_{a+} ^\alpha$ of order $0<\alpha\leq 1$ are given by
$$
I_{a+} ^\alpha  \left[ f \right]\left( t \right) = {\rm{
}}\frac{1}{{\Gamma \left( \alpha \right)}}\int\limits_a^t {\left(
{t - s} \right)^{\alpha  - 1} f\left( s \right)} ds, \,\,\, t\in(a,b],
$$
and
$$
D_{a+} ^\alpha \left[ f \right]\left( t \right) = \frac{{d }}{{dt
}}I_{a+} ^{1 - \alpha } \left[ f \right]\left( t \right), \,\,\, t\in(a, b],
$$
respectively and $f\in AC[a,b]$.  Here $\Gamma$ denotes the Euler gamma function.

Since $I^\alpha f(t) \rightarrow f(t)$ almost everywhere as $\alpha \rightarrow 0,$ then by definition we suppose that $I^0 f(t) = f(t).$ Hence $D_{a+}^1 f(t)=f'(t).$
\end{definition}

\begin{definition}
\label{fracderd}
The left Caputo fractional derivative of order $0<\alpha\leq 1$ is given by
$$
\partial_{a+} ^\alpha  \left[ f \right]\left( t \right) = D_{a+}
^\alpha  \left[ f\left( t \right) - f\left( a \right) \right]=I^{1-\alpha}_{a+}f'(t), \,\,\, t\in(a, b].
$$
\end{definition}

\begin{prop}
\label{rem1}
In Definition \ref{fracderd}, if $f(a)=0$, then $\partial_{a+} ^\alpha = D_{a+}^\alpha.$
\end{prop}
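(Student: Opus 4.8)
The plan is to read the conclusion directly off Definition \ref{fracderd} by substituting the hypothesis. By definition, the left Caputo derivative is $\partial_{a+}^\alpha[f](t) = D_{a+}^\alpha\left[f(t) - f(a)\right]$, where the value $f(a)$ is treated as a constant and subtracted before the Riemann--Liouville operator is applied. First I would observe that the hypothesis $f(a)=0$ makes this subtracted constant vanish, so that $f(t)-f(a)=f(t)$ for every $t\in(a,b]$. Substituting this identity into the definition gives $\partial_{a+}^\alpha[f](t) = D_{a+}^\alpha[f](t)$, which is exactly the claimed equality $\partial_{a+}^\alpha = D_{a+}^\alpha$. At the level of the argument, this is a one-line substitution and requires no analytic estimates.

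To make the role of the hypothesis fully transparent, I would also record the computation in the general case. Since $D_{a+}^\alpha = \frac{d}{dt}\circ I_{a+}^{1-\alpha}$ is a composition of linear operators, it is itself linear, so that
\[
\partial_{a+}^\alpha[f](t) = D_{a+}^\alpha[f](t) - D_{a+}^\alpha[f(a)](t).
\]
A direct evaluation of the fractional integral of the constant $c=f(a)$ from Definition of $I_{a+}^\alpha$ gives $I_{a+}^{1-\alpha}[c](t) = \frac{c}{\Gamma(2-\alpha)}(t-a)^{1-\alpha}$, and differentiating in $t$ together with the identity $\Gamma(2-\alpha)=(1-\alpha)\Gamma(1-\alpha)$ yields $D_{a+}^\alpha[c](t) = \frac{c}{\Gamma(1-\alpha)}(t-a)^{-\alpha}$. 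Hence the two derivatives differ precisely by the term $\frac{f(a)}{\Gamma(1-\alpha)}(t-a)^{-\alpha}$, which is exactly the quantity annihilated by the assumption $f(a)=0$.

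The only point worth any care — and the reason the statement deserves to be recorded at all — is that, unlike the classical derivative, the Riemann--Liouville derivative of a nonzero constant does \emph{not} vanish, as the computation above shows. Consequently the Caputo and Riemann--Liouville derivatives coincide only after the boundary value $f(a)$ has been subtracted off, and the hypothesis $f(a)=0$ is precisely what guarantees that this subtraction leaves $f$ unaltered. I do not expect any genuine obstacle here; the whole proof is the substitution of the first paragraph, with the constant computation of the second paragraph serving to confirm why the hypothesis is both necessary and sufficient.
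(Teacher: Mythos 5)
Your proof is correct and matches the paper's (implicit) reasoning: the paper states this Property without proof precisely because it is the one-line substitution of $f(a)=0$ into Definition \ref{fracderd} that your first paragraph records. Your supplementary computation of $D_{a+}^\alpha$ applied to the constant $f(a)$, giving the difference term $\frac{f(a)}{\Gamma(1-\alpha)}(t-a)^{-\alpha}$, is also correct and usefully explains why the hypothesis is needed, but it is not part of the argument the paper relies on.
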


\begin{prop}
\label{prope1}
If $f\in L^1([a,b])$ and $\alpha>0,\, \beta>0,$ then the following equality holds $$I_{a+}^\alpha I_{a+}^\beta f(t)=I_{a+}^{\alpha+\beta}f(t).$$
\end{prop}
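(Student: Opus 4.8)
The plan is to prove the semigroup (additivity) property by unfolding both fractional integrals according to the definition, interchanging the order of integration, and recognizing the resulting inner integral as an Euler Beta function. First I would substitute the definitions to obtain
\begin{equation*}
I_{a+}^\alpha I_{a+}^\beta f(t) = \frac{1}{\Gamma(\alpha)\Gamma(\beta)} \int_a^t (t-s)^{\alpha-1} \left( \int_a^s (s-\tau)^{\beta-1} f(\tau)\, d\tau \right) ds,
\end{equation*}
which is an iterated integral over the triangle $\{(s,\tau) : a \le \tau \le s \le t\}$.

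Next I would apply Fubini's theorem to exchange the order of integration in $s$ and $\tau$, which turns the triangle into $\{\tau \le s \le t\}$ for fixed $\tau \in [a,t]$ and yields
\begin{equation*}
I_{a+}^\alpha I_{a+}^\beta f(t) = \frac{1}{\Gamma(\alpha)\Gamma(\beta)} \int_a^t f(\tau) \left( \int_\tau^t (t-s)^{\alpha-1} (s-\tau)^{\beta-1}\, ds \right) d\tau.
\end{equation*}
For the inner integral I would use the change of variables $s = \tau + (t-\tau)u$ with $u \in [0,1]$, under which $t-s = (t-\tau)(1-u)$ and $s-\tau = (t-\tau)u$; this factors out $(t-\tau)^{\alpha+\beta-1}$ and leaves the Beta integral $\int_0^1 u^{\beta-1}(1-u)^{\alpha-1}\,du = \Gamma(\alpha)\Gamma(\beta)/\Gamma(\alpha+\beta)$. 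The two Gamma factors in the prefactor cancel against those produced by the Beta function, and what remains is precisely $\frac{1}{\Gamma(\alpha+\beta)}\int_a^t (t-\tau)^{\alpha+\beta-1} f(\tau)\, d\tau = I_{a+}^{\alpha+\beta} f(t)$.

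The main obstacle is justifying the interchange of integrals for merely $L^1$ data, since the kernels $(t-s)^{\alpha-1}$ and $(s-\tau)^{\beta-1}$ are singular when $\alpha$ or $\beta$ lies in $(0,1)$. To handle this rigorously I would first invoke Tonelli's theorem applied to the nonnegative integrand $|f(\tau)|\,(t-s)^{\alpha-1}(s-\tau)^{\beta-1}$; the very same Beta-function computation shows that its iterated integral equals $\frac{\Gamma(\alpha)\Gamma(\beta)}{\Gamma(\alpha+\beta)}\int_a^t |f(\tau)|\,(t-\tau)^{\alpha+\beta-1}\, d\tau$, which is finite for almost every $t$ because $f \in L^1([a,b])$ and $(t-\tau)^{\alpha+\beta-1}$ is integrable near $\tau = t$. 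This finiteness simultaneously guarantees that $I_{a+}^\beta f$ is well defined almost everywhere and licenses the application of Fubini in the step above, completing the argument.
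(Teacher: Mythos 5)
Your proof is correct and complete. The paper states this semigroup property without proof, quoting it from the standard references on fractional calculus; the argument you give --- Tonelli/Fubini over the triangle $\{a\le\tau\le s\le t\}$ followed by the Euler Beta integral, with the finiteness of the iterated integral of absolute values justifying both the interchange and the a.e.\ well-definedness for $f\in L^1([a,b])$ --- is precisely the classical proof found there, and it establishes the identity in the almost-everywhere sense appropriate to $L^1$ data.
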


\begin{prop}[\cite{1}]
\label{prope2}
If $f\in L^1([a,b])$ and $f'\in L^1([a,b]),$ then the equality $$I_a^\alpha \partial_{a+}^\alpha f(t)=f(t)- f(a), \, 0<\alpha\leq 1,$$ holds almost everywhere on $[a,b].$\end{prop}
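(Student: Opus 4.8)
The plan is to unwind the Caputo derivative into a composition of Riemann--Liouville integrals and then collapse that composition by the semigroup property already recorded in Property \ref{prope1}. Starting from Definition \ref{fracderd}, we have $\partial_{a+}^\alpha f(t) = I_{a+}^{1-\alpha} f'(t)$, so that
\[
I_{a+}^\alpha \partial_{a+}^\alpha f(t) = I_{a+}^\alpha I_{a+}^{1-\alpha} f'(t).
\]
The point to emphasize is that the hypothesis $f'\in L^1([a,b])$ is precisely what is needed to apply Property \ref{prope1} legitimately to the function $f'$, with the two orders $\alpha$ and $1-\alpha$.

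First I would treat the generic range $0<\alpha<1$, where both $\alpha>0$ and $1-\alpha>0$, so that Property \ref{prope1} applies with $\beta=1-\alpha$ to give
\[
I_{a+}^\alpha I_{a+}^{1-\alpha} f'(t) = I_{a+}^{\alpha+(1-\alpha)} f'(t) = I_{a+}^1 f'(t).
\]
Since $\Gamma(1)=1$, the definition of the Riemann--Liouville integral shows that the right-hand side is nothing but the ordinary primitive,
\[
I_{a+}^1 f'(t) = \int_a^t f'(s)\,ds,
\]
so that the entire statement reduces to the fundamental theorem of calculus.

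The remaining step, and the only place where the regularity assumptions genuinely enter, is to justify that $\int_a^t f'(s)\,ds = f(t)-f(a)$ holds almost everywhere; this is where I expect the (minor) main obstacle to lie. The assumptions $f,f'\in L^1([a,b])$ force $f$ to agree almost everywhere with an absolutely continuous representative, for which the fundamental theorem of calculus is valid off a null set, and the value $f(a)$ is read off from this representative. Finally I would dispose of the endpoint case $\alpha=1$ separately, since Property \ref{prope1} is stated only for strictly positive orders and does not cover $\beta=0$. There one invokes instead the convention $I_{a+}^0 f = f$ fixed in the Riemann--Liouville definition, which gives $\partial_{a+}^1 f = I_{a+}^0 f' = f'$ and hence $I_{a+}^1\partial_{a+}^1 f(t) = \int_a^t f'(s)\,ds = f(t)-f(a)$, the same conclusion. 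This completes the argument.
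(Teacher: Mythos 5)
The paper does not prove Property \ref{prope2} at all: it is quoted from \cite{1} (it is essentially Lemma 2.22 of Kilbas--Srivastava--Trujillo), so there is no internal proof to compare against. Your argument is exactly the standard one from that source: write $\partial_{a+}^{\alpha}f=I_{a+}^{1-\alpha}f'$, collapse $I_{a+}^{\alpha}I_{a+}^{1-\alpha}f'$ to $I_{a+}^{1}f'$ by the semigroup Property \ref{prope1} (legitimate since $f'\in L^{1}$ and both orders are positive for $0<\alpha<1$), and finish with the fundamental theorem of calculus; your separate treatment of $\alpha=1$ via the convention $I_{a+}^{0}=\mathrm{id}$ is also the right way to cover the endpoint that Property \ref{prope1} does not reach.

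The one step you should tighten is precisely the one you flag as the ``main obstacle.'' The bare hypotheses $f\in L^{1}$ and $f'\in L^{1}$ do \emph{not} force $f$ to agree a.e.\ with an absolutely continuous function if $f'$ means the pointwise a.e.\ derivative: the Cantor function $c$ satisfies $c\in L^{1}$, $c'=0\in L^{1}$ a.e., yet $\int_a^t c'(s)\,ds=0\neq c(t)-c(a)$, and $c$ is not a.e.\ equal to any absolutely continuous function (two continuous functions equal a.e.\ are equal everywhere). So as literally stated your sentence is false, and the identity $\int_a^t f'(s)\,ds=f(t)-f(a)$ would fail. The gap is inherited from the terse statement of the Property rather than from your strategy: it closes either by reading $f'$ as the distributional derivative (so that $f\in W^{1,1}$ has an absolutely continuous representative) or, more in the spirit of the paper, by invoking the standing assumption $f\in AC[a,b]$ already built into Definition 2.1, under which the fundamental theorem of calculus holds exactly and the rest of your argument goes through unchanged.
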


\section{The main results}

In this Section we derive the main results of this paper.

\begin{remark}
We note that in all statements of this section we will work with the Caputo fractional derivative $\partial^{\alpha}_{a+}$. But analogous results can be easily obtained for the Riemann-Liouville derivative $D_{a+} ^\alpha$ with the same order $\alpha\leq 1$ by adopting the techniques in the proofs and taking into account Property \ref{rem1}.
\end{remark}

\subsection{Poincar\'{e}--Sobolev type inequality}
In this subsection we show the Poincar\'{e}--Sobolev type inequality for fractional order operators.
\begin{theorem}\label{prop1cap} Let $u\in L^p(a,b),\,u(a)=0$, $\partial^{\alpha}_{a+}u\in L^p(a,b)$ and $p>1$. Then for the Caputo fractional derivative $\partial^{\alpha}_{a+}$ of order $\alpha\in \left(\frac{1}{p},1\right]$  we have the inequality
\begin{equation}\label{poinineq}
\|u\|_{L^{\infty}(a,b)}\leq \frac{(b-a)^{\alpha-\frac{1}{p}}}{\left(\frac{\alpha p}{p-1}-\frac{1}{p-1}\right)^\frac{p-1}{p}\Gamma(\alpha)} \left\|\partial^{\alpha}_{a+}u\right\|_{L^p(a,b)}.
\end{equation}
\end{theorem}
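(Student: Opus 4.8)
The plan is to represent $u$ explicitly through its Caputo derivative and then estimate the resulting integral with H\"older's inequality. Since $u(a)=0$, Property \ref{rem1} gives $\partial^{\alpha}_{a+}u = D^{\alpha}_{a+}u$, and the reconstruction formula in Property \ref{prope2} yields, for almost every $t\in(a,b]$,
\begin{equation*}
u(t)=u(t)-u(a)=I^{\alpha}_{a+}\partial^{\alpha}_{a+}u(t)=\frac{1}{\Gamma(\alpha)}\int_a^t (t-s)^{\alpha-1}\,\partial^{\alpha}_{a+}u(s)\,ds.
\end{equation*}
This identity is the backbone of the argument: it converts pointwise control of $u$ into control of a fractional integral of $\partial^{\alpha}_{a+}u$.

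Next I would bound $|u(t)|$ uniformly in $t$. Taking absolute values and applying H\"older's inequality with exponents $p$ and $p'=\frac{p}{p-1}$, I get
\begin{equation*}
|u(t)|\leq \frac{1}{\Gamma(\alpha)}\left(\int_a^t (t-s)^{(\alpha-1)p'}\,ds\right)^{1/p'}\left(\int_a^t |\partial^{\alpha}_{a+}u(s)|^p\,ds\right)^{1/p}.
\end{equation*}
The power integral converges precisely when $(\alpha-1)p'>-1$, i.e.\ $\alpha>\frac{1}{p}$, which is exactly the hypothesis; this is where the restriction $\alpha\in\left(\frac{1}{p},1\right]$ enters. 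Evaluating the first factor gives $\left(\frac{(t-a)^{(\alpha-1)p'+1}}{(\alpha-1)p'+1}\right)^{1/p'}$, and the exponent simplifies since $(\alpha-1)p'+1=\frac{\alpha p-1}{p-1}$, matching the constant $\left(\frac{\alpha p}{p-1}-\frac{1}{p-1}\right)^{(p-1)/p}$ in the denominator of \eqref{poinineq}.

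Finally I would take the supremum over $t\in(a,b)$. The second H\"older factor is bounded above by $\|\partial^{\alpha}_{a+}u\|_{L^p(a,b)}$ uniformly in $t$, and the first factor is increasing in $t$, so its supremum is attained at $t=b$, producing $(b-a)^{\alpha-1/p}$ after noting $\frac{(\alpha-1)p'+1}{p'}=\alpha-\frac{1}{p}$. Assembling the pieces gives exactly \eqref{poinineq}, and passing to the essential supremum on the left yields the $L^{\infty}$ norm.

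I do not anticipate a deep obstacle here; the argument is a clean application of the reconstruction identity followed by H\"older. The one point requiring care is justifying that the representation formula from Property \ref{prope2} holds and that the resulting bound controls $\|u\|_{L^\infty}$ rather than merely $\|u\|_{L^\infty}$ almost everywhere: the formula holds a.e.\ on $[a,b]$, so the pointwise estimate is valid a.e., which is enough to conclude the essential-supremum bound. Verifying that the exponent bookkeeping in the constant reduces to the stated form is routine but must be done accurately to match \eqref{poinineq} precisely.
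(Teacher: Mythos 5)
Your argument is exactly the paper's proof: both start from the representation $u(t)=I^{\alpha}_{a+}\partial^{\alpha}_{a+}u(t)$ (valid since $u(a)=0$, via Property \ref{prope2}), apply H\"older with the conjugate exponent $q=\frac{p}{p-1}$, use $\alpha>\frac{1}{p}$ to make the power integral finite, and bound $(t-a)$ by $(b-a)$ before taking the supremum. The exponent and constant bookkeeping matches \eqref{poinineq}, so the proposal is correct and follows the same route.
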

\begin{proof} Let $u\in L^p(a,b),\,u(a)=0$, $\partial^{\alpha}_{a+}u\in L^p(a,b)$ and consider the function
\begin{equation}
u(t)=I^{\alpha}_{a+}\partial^{\alpha}_{a+}u(t).
\end{equation}
Using the H\"{o}lder inequality with $\frac{1}{p}+\frac{1}{q}=1$, we obtain
\begin{align*}
\left|I^{\alpha}_{a+}\partial^{\alpha}_{a+}u(t)\right|&\leq \frac{1}{\Gamma(\alpha)}\int\limits_a^t\left|(t-s)^{\alpha-1}\partial^{\alpha}_{a+}u(s)\right|ds\\&\leq \frac{1}{\Gamma(\alpha)}\left(\int\limits_a^t(t-s)^{\alpha q-q} ds \right)^{\frac{1}{q}} \left(\int\limits_a^t \left|\partial^{\alpha}_{a+}u(s)\right|^p ds\right)^{\frac{1}{p}}\\&
\stackrel{\alpha>\frac{1}{p}}=\frac{(t-a)^{\alpha-1+\frac{1}{q}}}{\left(\alpha q-q+1\right)^\frac{1}{q}\Gamma(\alpha)} \left(\int\limits_a^t \left|\partial^{\alpha}_{a+}u(s)\right|^p ds\right)^{\frac{1}{p}}\\&\leq \frac{(b-a)^{\alpha-1+\frac{1}{q}}}{\left(\alpha q-q+1\right)^\frac{1}{q}\Gamma(\alpha)} \left\|\partial^{\alpha}_{a+}u\right\|_{L^p(a,b)}\\&
=\frac{(b-a)^{\alpha-\frac{1}{p}}}{\left(\alpha q-q+1\right)^\frac{1}{q}\Gamma(\alpha)} \left\|\partial^{\alpha}_{a+}u\right\|_{L^p(a,b)}\\&
=\frac{(b-a)^{\alpha-\frac{1}{p}}}{\left(\frac{\alpha p}{p-1}-\frac{1}{p-1}\right)^\frac{p-1}{p}\Gamma(\alpha)} \left\|\partial^{\alpha}_{a+}u\right\|_{L^p(a,b)},
\end{align*}
where $q=\frac{p}{p-1}>1$.

Then,
\begin{equation}
    \|u\|_{L^{\infty}(a,b)}=\|I^{\alpha}_{a+}\partial^{\alpha}_{a+}u\|_{L^{\infty}(a,b)}\leq \frac{(b-a)^{\alpha-\frac{1}{p}}}{\left(\frac{\alpha p}{p-1}-\frac{1}{p-1}\right)^\frac{p-1}{p}\Gamma(\alpha)} \left\|\partial^{\alpha}_{a+}u\right\|_{L^p(a,b)},
\end{equation}
showing \eqref{poinineq}.
\end{proof}
\begin{remark}
In Theorem \ref{prop1cap}, by taking $1<q<\infty$, we obtain
\begin{equation}
    \|u\|_{L^{q}(a,b)}\leq \frac{(b-a)^{\alpha-\frac{1}{p}+\frac{1}{q}}}{\left(\frac{\alpha p}{p-1}-\frac{1}{p-1}\right)^\frac{p-1}{p}\Gamma(\alpha)} \left\|\partial^{\alpha}_{a+}u\right\|_{L^p(a,b)}.
\end{equation}
\end{remark}
Let us also present the following  result.

\begin{theorem} \label{sobolev type-1}
Let $\partial^{\alpha}_{a+}u\in L^p(a,b)$ with $p>1$ and let $\beta\in [0,1)$ be such that $\alpha\in\left(\beta+\frac{1}{p},1\right]$. Then for the Caputo fractional derivative $\partial^{\beta}_{a+}$, we have
\begin{equation}\label{soblevt1}
\|\partial^{\beta}_{a+}u\|_{L^{\infty}(a,b)}\leq \frac{(b-a)^{\alpha-\beta-\frac{1}{p}+\frac{1}{q}}}{\left(\alpha q-\beta q-q+1\right)^\frac{1}{q}\Gamma(\alpha-\beta)} \left\|\partial^{\alpha}_{a+}u\right\|_{L^p(a,b)},
\end{equation}
for all $1<  p\leq q<\infty$, where $\frac{1}{p}+\frac{1}{q}=1$.
\end{theorem}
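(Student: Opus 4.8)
The plan is to reduce \eqref{soblevt1} to the very structure already handled in Theorem \ref{prop1cap}: I would first rewrite the lower-order Caputo derivative $\partial^{\beta}_{a+}u$ as a single Riemann--Liouville fractional integral, of order $\alpha-\beta$, applied to the top-order derivative $\partial^{\alpha}_{a+}u$, and then repeat the Hölder estimate of Theorem \ref{prop1cap} verbatim with $\alpha$ replaced by $\alpha-\beta$ and $\Gamma(\alpha)$ by $\Gamma(\alpha-\beta)$.

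The key step is the composition identity
\begin{equation*}
\partial^{\beta}_{a+}u(t)=I^{\alpha-\beta}_{a+}\partial^{\alpha}_{a+}u(t),\qquad t\in(a,b].
\end{equation*}
To obtain it I would use the Caputo representation $\partial^{\gamma}_{a+}u=I^{1-\gamma}_{a+}u'$ from Definition \ref{fracderd} for both $\gamma=\alpha$ and $\gamma=\beta$, together with the semigroup law of Property \ref{prope1}. Since $\alpha-\beta>0$ and $1-\alpha\geq0$, Property \ref{prope1} gives $I^{\alpha-\beta}_{a+}\,I^{1-\alpha}_{a+}u'=I^{(\alpha-\beta)+(1-\alpha)}_{a+}u'=I^{1-\beta}_{a+}u'$, and the two extreme sides are exactly $I^{\alpha-\beta}_{a+}\partial^{\alpha}_{a+}u$ and $\partial^{\beta}_{a+}u$. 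I would stress that routing through $u'$ makes no use of a boundary condition on $u$: the Caputo derivative already subtracts the initial value, which is why, unlike in Theorem \ref{prop1cap}, the hypothesis $u(a)=0$ is absent here.

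With the identity established, I would write
\begin{equation*}
\partial^{\beta}_{a+}u(t)=\frac{1}{\Gamma(\alpha-\beta)}\int_a^t(t-s)^{\alpha-\beta-1}\partial^{\alpha}_{a+}u(s)\,ds,
\end{equation*}
apply Hölder's inequality with the conjugate pair $p,\,q=\frac{p}{p-1}$, and split off the kernel $(t-s)^{\alpha-\beta-1}$. The hypothesis $\alpha\in\left(\beta+\frac1p,1\right]$ is precisely what forces $(\alpha-\beta-1)q>-1$, equivalently $\alpha q-\beta q-q+1>0$, so that the power integral $\int_a^t(t-s)^{(\alpha-\beta-1)q}\,ds$ converges, furnishing the denominator factor $(\alpha q-\beta q-q+1)^{1/q}$ exactly as in \eqref{soblevt1} together with a positive power of $t-a$; taking the supremum over $t\in(a,b)$ then replaces $t-a$ by $b-a$ and completes the bound.

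The only genuine obstacle is the justification of this composition identity rather than the subsequent estimate, which is mechanical. In particular one must secure enough regularity for the semigroup step applied to $u'$ (so that $u\in AC[a,b]$ with $u'\in L^1(a,b)$ is implicit in $\partial^{\alpha}_{a+}u\in L^p(a,b)$) and check that it persists in the degenerate cases $\alpha=1$, where $I^{1-\alpha}_{a+}=I^0$ is the identity, and $\beta=0$. Once the reduction to $\partial^{\beta}_{a+}u=I^{\alpha-\beta}_{a+}\partial^{\alpha}_{a+}u$ is in place, the argument is identical to that of Theorem \ref{prop1cap}.
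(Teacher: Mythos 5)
Your proposal is correct and follows essentially the same route as the paper: the paper's proof also establishes the identity $\partial^{\beta}_{a+}u=I^{1-\beta}_{a+}u'=I^{\alpha-\beta}_{a+}I^{1-\alpha}_{a+}u'=I^{\alpha-\beta}_{a+}\partial^{\alpha}_{a+}u$ via Definition \ref{fracderd} and Property \ref{prope1}, and then runs the identical H\"older estimate on the kernel $(t-s)^{\alpha-\beta-1}$, using $\alpha>\beta+\frac{1}{p}$ to guarantee $\alpha q-\beta q-q+1>0$. Your additional remarks on regularity and the degenerate cases $\alpha=1$, $\beta=0$ are reasonable elaborations that the paper leaves implicit.
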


\begin{proof} By using Definition \ref{fracderd} and Properties \ref{prope1} and \ref{prope2}, we introduce the function
\begin{equation}\partial^{\beta}_{a+}u(t)=I^{1-\beta}_{a+}u'(t)=I^{\alpha-\beta}_{a+}I^{1-\alpha}_{a+}u'(t)=I^{\alpha-\beta}_{a+}\partial^{\alpha}_{a+}u(t).\end{equation}
Using the H\"{o}lder inequality with $\frac{1}{p}+\frac{1}{q}=1$, we get
\begin{align*}
\left|I^{\alpha-\beta}_{a+}\partial^{\alpha}_{a+}u(t)\right|&\leq \frac{1}{\Gamma(\alpha-\beta)}\int\limits_a^t\left|(t-s)^{\alpha-\beta-1}\partial^{\alpha}_{a+}u(s)\right|ds\\&\leq \frac{1}{\Gamma(\alpha-\beta)}\left(\int\limits_a^t(t-s)^{\alpha q-\beta q-q} ds \right)^{\frac{1}{q}} \left(\int\limits_a^t \left|\partial^{\alpha}_{a+}u(s)\right|^p ds\right)^{\frac{1}{p}}\\&=\frac{(t-a)^{\alpha-\beta-1+\frac{1}{q}}}{\left(\alpha q-\beta q-q+1\right)^\frac{1}{q}\Gamma(\alpha-\beta)} \left(\int\limits_a^t \left|\partial^{\alpha}_{a+}u(s)\right|^p ds\right)^{\frac{1}{p}}
\\&=\frac{(t-a)^{\alpha-\beta-\frac{1}{p}}}{\left(\alpha q-\beta q-q+1\right)^\frac{1}{q}\Gamma(\alpha-\beta)} \left(\int\limits_a^t \left|\partial^{\alpha}_{a+}u(s)\right|^p ds\right)^{\frac{1}{p}}
\\&\leq \frac{(b-a)^{\alpha-\beta-\frac{1}{p}}}{\left(\alpha q-\beta q-q+1\right)^\frac{1}{q}\Gamma(\alpha-\beta)} \left\|\partial^{\alpha}_{a+}u\right\|_{L^p(a,b)}
,
\end{align*}
where by assumption $\alpha>\beta+\frac{1}{p}$, we have $\alpha q-\beta q-q+1>0$.
From this, we obtain
\begin{align}
\|\partial^{\beta}_{a+}u\|_{L^{\infty}(a,b)}&\leq \frac{(b-a)^{\alpha-\beta-\frac{1}{p}}}{\left(\alpha q-\beta q-q+1\right)^\frac{1}{q}\Gamma(\alpha-\beta)} \left\|\partial^{\alpha}_{a+}u\right\|_{L^p(a,b)},
\end{align}
showing \eqref{soblevt1}.
\end{proof}

\begin{remark}
In \eqref{soblevt1}, if $\beta=0$, we obtain the Sobolev type inequality.
\end{remark}

\begin{remark}
In Theorem \ref{sobolev type-1}, by taking $1<q<\infty$, we get
\begin{align}
\|\partial^{\beta}_{a+}u\|_{L^{q}(a,b)}&\leq \frac{(b-a)^{\alpha-\beta-\frac{1}{p}+\frac{1}{q}}}{\left(\alpha q-\beta q-q+1\right)^\frac{1}{q}\Gamma(\alpha-\beta)} \left\|\partial^{\alpha}_{a+}u\right\|_{L^p(a,b)}.
\end{align}

\end{remark}

\subsection{Hardy type inequality} Let us show the Hardy inequality.
\begin{theorem}\label{thmharcap}
Let $a>0,\,\,u(a)=0$ and $\partial^{\alpha}_{a+}u\in L^p(a,b)$ with $p>1$ and $\alpha\in \left(\frac{1}{p},1\right]$. Then for the Caputo fractional derivative $\partial^{\alpha}_{a+}$ we have the inequality
\begin{equation}\label{hardycap}
\left\|\frac{u}{x}\right\|_{L^p(a,b)}\leq \frac{a^{-1}(b-a)^{\alpha}}{\left(\frac{\alpha p}{p-1}-\frac{1}{p-1}\right)^\frac{p-1}{p}\Gamma(\alpha)} \left\|\partial^{\alpha}_{a+}u\right\|_{L^p(a,b)}.
\end{equation}

\end{theorem}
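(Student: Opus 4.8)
The plan is to reduce the Hardy inequality \eqref{hardycap} to the Poincar\'e--Sobolev bound of Theorem \ref{prop1cap}, using that the weight $1/x$ is bounded on $[a,b]$ precisely because $a>0$. The hypotheses assumed here---$u(a)=0$, $\partial^{\alpha}_{a+}u\in L^p(a,b)$, $p>1$ and $\alpha\in(\tfrac1p,1]$---are exactly those of Theorem \ref{prop1cap}, so its conclusion \eqref{poinineq} is available without further work.

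First I would exploit $a>0$: for every $x\in[a,b]$ one has $\tfrac1x\le\tfrac1a$, hence pointwise $\tfrac{|u(x)|}{x}\le a^{-1}|u(x)|$, and taking $L^p$ norms over $(a,b)$ gives
\[
\left\|\frac{u}{x}\right\|_{L^p(a,b)}\le a^{-1}\|u\|_{L^p(a,b)}.
\]
This is the only step that genuinely uses the assumption $a>0$, and it is what prevents the weight from being singular.

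Next I would control $\|u\|_{L^p(a,b)}$ by the trivial embedding on a bounded interval, $\|u\|_{L^p(a,b)}\le(b-a)^{1/p}\|u\|_{L^\infty(a,b)}$, and then apply Theorem \ref{prop1cap} to replace $\|u\|_{L^\infty(a,b)}$ by the right-hand side of \eqref{poinineq}. Chaining these estimates yields
\[
\left\|\frac{u}{x}\right\|_{L^p(a,b)}\le a^{-1}(b-a)^{1/p}\,\frac{(b-a)^{\alpha-\frac1p}}{\left(\frac{\alpha p}{p-1}-\frac{1}{p-1}\right)^{\frac{p-1}{p}}\Gamma(\alpha)}\left\|\partial^{\alpha}_{a+}u\right\|_{L^p(a,b)}.
\]

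Finally, the two powers of $(b-a)$ combine as $(b-a)^{1/p}(b-a)^{\alpha-1/p}=(b-a)^{\alpha}$, while the remaining constant is unchanged, producing exactly the stated bound \eqref{hardycap}. I expect no substantial obstacle here: the argument is a direct composition of the sup-norm estimate with the elementary boundedness of $1/x$, and the only point that really needs checking is the bookkeeping of exponents that collapses to the clean factor $(b-a)^{\alpha}$.
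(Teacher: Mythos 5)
Your proposal is correct and follows essentially the same route as the paper: bound $1/x$ by $a^{-1}$ using $a>0$, then control $\|u\|_{L^p(a,b)}$ via the Poincar\'e--Sobolev estimate \eqref{poinineq}. The only difference is that you make explicit the intermediate step $\|u\|_{L^p(a,b)}\leq (b-a)^{1/p}\|u\|_{L^\infty(a,b)}$ that turns the $(b-a)^{\alpha-1/p}$ of \eqref{poinineq} into the $(b-a)^{\alpha}$ of \eqref{hardycap}, which the paper leaves implicit in its chain of inequalities.
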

\begin{proof}
From $a< x< b$ we have $\frac{1}{b}<\frac{1}{x}<\frac{1}{a}.$ By using Theorem \ref{prop1cap}, we calculate
\begin{equation}\label{hhh}
\begin{split}
\left(\int_{a}^{b}\frac{|u(x)|^{p}}{x^{p}}dx\right)^{\frac{1}{p}}&=\left(\int_{a}^{b}x^{-p}|u(x)|^{p}dx\right)^{\frac{1}{p}}\\&
\leq a^{-1}\|u\|_{L^{p}(a,b)}\\&
\stackrel{\eqref{poinineq}}\leq\frac{a^{-1}(b-a)^{\alpha}}{\left(\frac{\alpha p}{p-1}-\frac{1}{p-1}\right)^\frac{p-1}{p}\Gamma(\alpha)} \left\|\partial^{\alpha}_{a+}u\right\|_{L^p(a,b)},
\end{split}
\end{equation}
showing \eqref{hardycap}.
\end{proof}
Let us give the weighted one-dimensional Hardy type inequality.

\begin{theorem}\label{weiHar}
Let $a>0$, $u\in L^p(a,b),\,u(a)=0$ and $\partial^{\alpha}_{a+}u\in L^p(a,b)$ with $p>1$ and $\alpha\in \left(\frac{1}{p},1\right]$. Then for the Caputo fractional derivative $\partial^{\alpha}_{a+}$ of order $\alpha$ and $\gamma\in\mathbb{R}$
we have
\begin{equation}\label{weighthar}
\left\|\frac{u}{x^{\gamma+1}}\right\|_{L^p(a,b)}\leq \frac{a^{-|\gamma|-1}b^{|\gamma|}(b-a)^{\alpha}}{\left(\alpha q-q+1\right)^\frac{1}{q}\Gamma(\alpha)} \left\|\frac{\partial^{\alpha}_{a+}u}{x^{\gamma}}\right\|_{L^p(a,b)},
\end{equation}
where $q=\frac{p}{p-1}.$
\end{theorem}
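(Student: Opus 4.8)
The plan is to follow the same scheme as in the proof of Theorem~\ref{thmharcap}, but to carry the weight through the kernel estimate instead of pulling it out only at the very end. Since $u(a)=0$, Property~\ref{prope2} gives the representation $u(x)=I^{\alpha}_{a+}\partial^{\alpha}_{a+}u(x)$, so that
\begin{equation*}
\frac{|u(x)|}{x^{\gamma+1}}\leq\frac{1}{\Gamma(\alpha)}\int_a^x (x-s)^{\alpha-1}\,x^{-\gamma-1}s^{\gamma}\,\frac{|\partial^{\alpha}_{a+}u(s)|}{s^{\gamma}}\,ds,
\end{equation*}
where I have inserted the factor $s^{\gamma}/s^{\gamma}$ precisely so that the quantity $\partial^{\alpha}_{a+}u(s)/s^{\gamma}$ appearing on the right-hand side of \eqref{weighthar} is exposed.

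The key step is to control the weight $x^{-\gamma-1}s^{\gamma}$ uniformly on the region $a<s<x<b$. Writing $x^{-\gamma-1}s^{\gamma}=x^{-1}(s/x)^{\gamma}$ and noting that $s/x\in(a/b,1)$, one checks that $(s/x)^{\gamma}\leq (b/a)^{|\gamma|}$ for both signs of $\gamma$: when $\gamma\geq0$ the ratio $s/x$ is $\leq1$, and when $\gamma<0$ it equals $(x/s)^{|\gamma|}\leq(b/a)^{|\gamma|}$. Combining this with $x^{-1}\leq a^{-1}$ yields $x^{-\gamma-1}s^{\gamma}\leq a^{-1}(b/a)^{|\gamma|}=a^{-|\gamma|-1}b^{|\gamma|}$. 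This single bound absorbs both the $a^{-|\gamma|-1}$ and $b^{|\gamma|}$ factors in the claimed constant and is the only place where the absolute value in the exponent is produced; I expect this to be the main technical point of the argument.

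With the weight frozen, the remaining estimate is identical in spirit to Theorem~\ref{prop1cap}. Applying the H\"{o}lder inequality with exponents $q$ and $p$ to $(x-s)^{\alpha-1}$ and $|\partial^{\alpha}_{a+}u(s)|/s^{\gamma}$ and evaluating $\int_a^x(x-s)^{(\alpha-1)q}ds=\frac{(x-a)^{\alpha q-q+1}}{\alpha q-q+1}$ (finite since $\alpha>\tfrac1p$ forces $\alpha q-q+1>0$) yields
\begin{equation*}
\frac{|u(x)|}{x^{\gamma+1}}\leq\frac{a^{-|\gamma|-1}b^{|\gamma|}\,(x-a)^{\alpha-\frac1p}}{(\alpha q-q+1)^{\frac1q}\Gamma(\alpha)}\left(\int_a^x\frac{|\partial^{\alpha}_{a+}u(s)|^p}{s^{\gamma p}}\,ds\right)^{\frac1p}.
\end{equation*}
Bounding $(x-a)^{\alpha-1/p}\leq(b-a)^{\alpha-1/p}$ and extending the inner integral to $(a,b)$ turns this into an $L^{\infty}$ bound for $u/x^{\gamma+1}$, and finally multiplying by $(b-a)^{1/p}$ through the estimate $\|\cdot\|_{L^p(a,b)}\leq(b-a)^{1/p}\|\cdot\|_{L^{\infty}(a,b)}$—exactly the conversion used at the end of Theorem~\ref{thmharcap}—produces the factor $(b-a)^{\alpha}$ and establishes \eqref{weighthar}.
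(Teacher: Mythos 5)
Your proof is correct: the uniform bound $x^{-\gamma-1}s^{\gamma}\leq a^{-|\gamma|-1}b^{|\gamma|}$ on the region $a<s<x<b$ is valid for both signs of $\gamma$, the H\"older step reproduces the factor $(x-a)^{\alpha-1/p}/(\alpha q-q+1)^{1/q}$ exactly as in Theorem \ref{prop1cap}, and the final $L^{\infty}\to L^{p}$ conversion supplies the remaining $(b-a)^{1/p}$, so you land on precisely the stated constant. Your route differs from the paper's in its organization. The paper splits into the cases $\gamma\geq0$ and $\gamma<0$: for $\gamma\geq0$ it bounds $x^{-\gamma-1}\leq a^{-\gamma-1}$, invokes the unweighted Poincar\'e--Sobolev inequality \eqref{poinineq}, and then reinserts the weight on the right-hand side via $x^{\gamma}\leq b^{\gamma}$; for $\gamma<0$ it instead peels off $b^{-\gamma}$ and reduces to the Hardy inequality \eqref{hardycap} before reinserting the weight via $x^{\gamma}\leq a^{\gamma}$. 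You avoid the case split entirely by carrying the combined weight $x^{-\gamma-1}s^{\gamma}$ through the integral representation $u=I^{\alpha}_{a+}\partial^{\alpha}_{a+}u$ and freezing it in a single estimate, which makes the appearance of $|\gamma|$ in the exponent transparent and keeps the argument self-contained (no appeal to Theorem \ref{thmharcap}). The underlying mechanism is the same in both treatments --- since $a>0$ all weights are bounded above and below on $(a,b)$, so the weighted inequality is a constant-multiple perturbation of the unweighted one --- but your version is the more economical bookkeeping, while the paper's version has the modular advantage of reusing the already established Theorems \ref{prop1cap} and \ref{thmharcap}.
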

\begin{proof}
We prove our statement in two stages, namely, when $\gamma\geq0$ and $\gamma<0$. Firstly, let us study the case $\gamma\geq0$. For $a>0$, we have $b^{-\gamma-1}<x^{-\gamma-1}<a^{-\gamma-1}$, so that
\begin{equation}
 \begin{split}
\left(\int_{a}^{b}\frac{|u(x)|^{p}}{x^{(\gamma+1)p}}dx\right)^{\frac{1}{p}}&\leq a^{-\gamma-1}\left(\int_{a}^{b}|u(x)|^{p}dx\right)^{\frac{1}{p}}\\&
\stackrel{\eqref{poinineq}}\leq \frac{a^{-\gamma-1}(b-a)^{\alpha}}{\left(\frac{\alpha p}{p-1}-\frac{1}{p-1}\right)^\frac{p-1}{p}\Gamma(\alpha)} \left(\int_{a}^{b}|\partial^{\alpha}_{a+}u|^{p}dx\right)^{\frac{1}{p}}\\&
=\frac{a^{-\gamma-1}(b-a)^{\alpha}}{\left(\frac{\alpha p}{p-1}-\frac{1}{p-1}\right)^\frac{p-1}{p}\Gamma(\alpha)} \left(\int_{a}^{b}\frac{x^{\gamma p}}{x^{\gamma p}}|\partial^{\alpha}_{a+}u|^{p}dx\right)^{\frac{1}{p}}\\&
\leq\frac{a^{-\gamma-1}b^{\gamma }(b-a)^{\alpha}}{\left(\frac{\alpha p}{p-1}-\frac{1}{p-1}\right)^\frac{p-1}{p}\Gamma(\alpha)} \left(\int_{a}^{b}\frac{|\partial^{\alpha}_{a+}u|^{p}}{x^{\gamma p}}dx\right)^{\frac{1}{p}}\\&
=\frac{a^{-\gamma-1}b^{\gamma}(b-a)^{\alpha}}{\left(\alpha q-q+1\right)^\frac{1}{q}\Gamma(\alpha)} \left\|\frac{\partial^{\alpha}_{a+}u}{x^{\gamma}}\right\|_{L^p(a,b)}.
\end{split}
\end{equation}

To show the case $\gamma<0$, one obtains
\begin{equation}
 \begin{split}
\left(\int_{a}^{b}\frac{|u(x)|^{p}}{x^{(\gamma+1)p}}dx\right)^{\frac{1}{p}}&=\left(\int_{a}^{b}\frac{|u(x)|^{p}}{x^{(\gamma p+p)}}dx\right)^{\frac{1}{p}}
\\&\leq b^{-\gamma}\left(\int_{a}^{b}\frac{|u(x)|^{p}}{x^{p}}dx\right)^{\frac{1}{p}}\\&
\stackrel{\eqref{hardycap}}\leq \frac{a^{-1}b^{-\gamma}(b-a)^{\alpha}}{\left(\frac{\alpha p}{p-1}-\frac{1}{p-1}\right)^\frac{p-1}{p}\Gamma(\alpha)} \left\|\partial^{\alpha}_{a+}u\right\|_{L^p(a,b)}\\&
= \frac{a^{-1}b^{-\gamma}(b-a)^{\alpha}}{\left(\frac{\alpha p}{p-1}-\frac{1}{p-1}\right)^\frac{p-1}{p}\Gamma(\alpha)} \left(\int_{a}^{b}|\partial^{\alpha}_{a+}u|^{p}dx\right)^{\frac{1}{p}}\\&
=\frac{a^{-1}b^{-\gamma}(b-a)^{\alpha}}{\left(\frac{\alpha p}{p-1}-\frac{1}{p-1}\right)^\frac{p-1}{p}\Gamma(\alpha)} \left(\int_{a}^{b}\frac{x^{\gamma p}}{x^{\gamma p}}|\partial^{\alpha}_{a+}u|^{p}dx\right)^{\frac{1}{p}}\\&
\leq \frac{a^{\gamma-1}b^{-\gamma}(b-a)^{\alpha}}{\left(\frac{\alpha p}{p-1}-\frac{1}{p-1}\right)^\frac{p-1}{p}\Gamma(\alpha)} \left(\int_{a}^{b}\frac{|\partial^{\alpha}_{a+}u|^{p}}{x^{\gamma p}}dx\right)^{\frac{1}{p}}\\&
=\frac{a^{\gamma-1}b^{-\gamma}(b-a)^{\alpha}}{\left(\frac{\alpha p}{p-1}-\frac{1}{p-1}\right)^\frac{p-1}{p}\Gamma(\alpha)} \left\|\frac{\partial^{\alpha}_{a+}u}{x^{\gamma}}\right\|_{L^{p}(a,b)},
\end{split}
\end{equation}
implying \eqref{weighthar}.
\end{proof}
\subsection{Gagliardo-Nirenberg type inequality}
Now, we are on a way to establish the Gagliardo-Nirenberg inequality for differential operators of fractional orders. We show that the Sobolev type inequality formulated in Theorem \ref{sobolev type-1} implies a family of Gagliardo--Nirenberg inequalities.

\begin{theorem}\label{LM: GN-Ineq-y}
Assume that $1\leq p, q<\infty$, $\alpha\in\left(\frac{1}{q},1\right]$ and $u(a)=0$.
Then we have the following Gagliardo-Nirenberg type inequality
\begin{equation}\label{EQ: GN-inequalitycap}
\|u\|_{L^{\gamma}(a, b)}\leq  C\|\partial^{\alpha}_{a+} u\|_{L^{q}(a, b)}^{s} \|u\|_{L^{p}(a, b)}^{1-s},
\end{equation}
with
\begin{equation}
\frac{\gamma s}{q}+\frac{\gamma(1-s)}{p}=1,
\end{equation}
where $s\in[0,1]$.
\end{theorem}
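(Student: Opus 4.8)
The plan is to derive \eqref{EQ: GN-inequalitycap} from the Poincar\'e--Sobolev inequality of Theorem \ref{prop1cap} by composing it with the elementary interpolation (log-convexity) of the $L^\gamma$-norm between $L^q$ and $L^p$. The crucial observation is that the balance condition $\frac{\gamma s}{q}+\frac{\gamma(1-s)}{p}=1$ is precisely the statement that two Hölder conjugate exponents fit together, which is why no scaling information involving $\alpha$ enters this relation.

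First I would carry out the interpolation step. Writing $|u|^{\gamma}=|u|^{\gamma s}\,|u|^{\gamma(1-s)}$ and applying the Hölder inequality on $(a,b)$ with the pair of exponents $\frac{q}{\gamma s}$ and $\frac{p}{\gamma(1-s)}$ --- which are conjugate exactly because of the stated identity $\frac{\gamma s}{q}+\frac{\gamma(1-s)}{p}=1$, and which are genuine Hölder exponents since the two reciprocals are nonnegative and sum to $1$ --- yields
\[
\|u\|_{L^{\gamma}(a,b)}\leq \|u\|_{L^{q}(a,b)}^{s}\,\|u\|_{L^{p}(a,b)}^{1-s}.
\]
The endpoint cases $s=0$ (then $\gamma=p$) and $s=1$ (then $\gamma=q$) are trivial, so I would assume $s\in(0,1)$; in that range the relation forces $\gamma$ into the interpolation window $[\min(p,q),\max(p,q)]$ and all exponents above are legitimate.

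Next I would absorb the factor $\|u\|_{L^{q}(a,b)}^{s}$ by the Sobolev-type bound. Since $u(a)=0$, $\partial^{\alpha}_{a+}u\in L^{q}(a,b)$ and $\alpha\in\left(\frac{1}{q},1\right]$, Theorem \ref{prop1cap} (with the role of $p$ there played by the present $q$) gives $\|u\|_{L^{\infty}(a,b)}\leq C_1\|\partial^{\alpha}_{a+}u\|_{L^{q}(a,b)}$, and on the finite interval this upgrades to
\[
\|u\|_{L^{q}(a,b)}\leq (b-a)^{1/q}\,\|u\|_{L^{\infty}(a,b)}\leq C\,\|\partial^{\alpha}_{a+}u\|_{L^{q}(a,b)},
\]
with $C$ independent of $u$. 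Substituting this into the interpolation inequality and renaming the constant $C^{s}$ as $C$ produces exactly \eqref{EQ: GN-inequalitycap}.

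I do not expect a genuine obstacle here: the argument is simply the composition of a one-line Hölder interpolation with an already-proved inequality. The only points requiring care are (i) verifying that the balance relation is precisely Hölder conjugacy, so that $\gamma$ is automatically confined to $[\min(p,q),\max(p,q)]$ and the resulting $L^q$--$L^p$ interpolation is valid; and (ii) checking that the constant supplied by Theorem \ref{prop1cap} is finite, which is guaranteed by the hypothesis $\alpha>\frac1q$ (equivalently $\alpha q-q+1>0$ after relabelling, and in particular forcing $q>1$). Tracking the explicit dependence of $C$ on $a,b,\alpha,q$ is then routine.
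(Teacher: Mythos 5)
Your proposal is correct and follows essentially the same route as the paper: split $|u|^{\gamma}=|u|^{\gamma s}|u|^{\gamma(1-s)}$, apply H\"older with the conjugate pair $\frac{q}{\gamma s}$, $\frac{p}{\gamma(1-s)}$ determined by the balance condition, and then absorb $\|u\|_{L^{q}(a,b)}^{s}$ via the Poincar\'e--Sobolev inequality of Theorem \ref{prop1cap} (with $q$ in the role of $p$, passing through the $L^{\infty}$ bound on the finite interval). Your added checks on the legitimacy of the H\"older exponents and the endpoint cases $s=0,1$ are sensible refinements of the same argument.
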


\begin{proof} 
By using the H\"{o}lder inequality with $\frac{\gamma s}{q}+\frac{\gamma(1-s)}{p}=1,$  we have
\begin{equation}
\begin{split}
    \int_{a}^{b}|u(x)|^{\gamma}dx&=\int_{a}^{b}|u(x)|^{\gamma s}|u(x)|^{\gamma(1-s)}dx\\&
    \leq\left(\int_{a}^{b}|u(x)|^{q}dx\right)^{\frac{\gamma s}{q}} \left(\int_{a}^{b}|u(x)|^{p}dx\right)^{\frac{\gamma(1-s)}{p}}\\&
    \stackrel{\eqref{poinineq}}\leq C\|\partial^{\alpha}_{a+} u\|^{\gamma s}_{L^{q}(a, b)} \|u\|_{L^{p}(a, b)}^{\gamma (1-s)},
\end{split}
\end{equation}
showing \eqref{EQ: GN-inequalitycap}.
\end{proof}

Let us consider the space $\dot{H}_{+}^{\alpha}(a,b)$ with $\alpha\in\left(\frac{1}{2},1\right]$ of the following form
\begin{equation*}
    \dot{H}_{+}^{\alpha}(a,b):=\{u\in L^{2}(a,b),\,\,\partial_{a+}^{\alpha}u\in L^{2}(a,b),\,\,u(a)=0\}.
\end{equation*}
In particular case of Theorem \ref{LM: GN-Ineq-y}, which is important for our further analysis, when $q=2$ and $\alpha=1$, one obtains the classical Gagliardo-Nirenberg inequality:

\begin{cor}\label{CR: GN-Ineq-s on graded Lie Groups}
We have the following Gagliardo-Nirenberg type inequality
\begin{equation}
\label{EQ: GN-inequality on graded Lie Groups}
\|u\|_{L^{\gamma}(a, b)}\leq
C \|u\|_{\dot{H}_{+}^{1}(a, b)}^{s} \|u\|_{L^{p}(a, b)}^{1-s},
\end{equation}
for $s\in[0, 1]$.
\end{cor}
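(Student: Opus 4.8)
The plan is to read off the corollary as the special case $q=2$, $\alpha=1$ of Theorem \ref{LM: GN-Ineq-y}, so that essentially no new work is required; the only point needing attention is the identification of the homogeneous norm on $\dot{H}_{+}^{1}(a,b)$ with the $L^{2}$-norm of a first-order derivative. First I would check the hypotheses. Choosing $q=2$ forces the admissibility range $\alpha\in\left(\frac{1}{q},1\right]=\left(\frac{1}{2},1\right]$, and the endpoint $\alpha=1$ indeed belongs to this interval, which is exactly the range in which the space $\dot{H}_{+}^{\alpha}(a,b)$ was introduced. The condition $u(a)=0$, required in Theorem \ref{LM: GN-Ineq-y}, is built into the definition of $\dot{H}_{+}^{1}(a,b)$, while $u\in L^{2}(a,b)$ together with $\partial^{1}_{a+}u\in L^{2}(a,b)$ guarantees that both factors on the right-hand side are finite. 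Thus every assumption of Theorem \ref{LM: GN-Ineq-y} is met upon taking the given $p$, $q=2$ and $\alpha=1$.

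Next I would specialize the derivative. For $\alpha=1$ the Caputo derivative collapses to the classical one: by Definition \ref{fracderd} and the convention $I^{0}f=f$ recorded after the first definition,
\begin{equation*}
\partial^{1}_{a+}u(t)=I^{1-1}_{a+}u'(t)=I^{0}_{a+}u'(t)=u'(t).
\end{equation*}
Hence $\|\partial^{1}_{a+}u\|_{L^{2}(a,b)}=\|u'\|_{L^{2}(a,b)}$, which is precisely the (homogeneous) norm $\|u\|_{\dot{H}_{+}^{1}(a,b)}$ attached to the space defined above.

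Finally I would substitute these choices into the conclusion of Theorem \ref{LM: GN-Ineq-y}. The interpolation inequality \eqref{EQ: GN-inequalitycap} becomes
\begin{equation*}
\|u\|_{L^{\gamma}(a, b)}\leq C\|\partial^{1}_{a+} u\|_{L^{2}(a, b)}^{s}\|u\|_{L^{p}(a, b)}^{1-s}
= C\|u\|_{\dot{H}_{+}^{1}(a, b)}^{s}\|u\|_{L^{p}(a, b)}^{1-s},
\end{equation*}
while the balance relation $\frac{\gamma s}{q}+\frac{\gamma(1-s)}{p}=1$ reads $\frac{\gamma s}{2}+\frac{\gamma(1-s)}{p}=1$, which determines $\gamma$ from $s\in[0,1]$ and $p$. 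This yields \eqref{EQ: GN-inequality on graded Lie Groups}. There is no genuine obstacle here: the whole content is the bookkeeping needed to recognize the corollary as an instance of the already-established theorem, and the only mildly nontrivial verification is the reduction $\partial^{1}_{a+}u=u'$ that matches the norm of $\dot{H}_{+}^{1}(a,b)$.
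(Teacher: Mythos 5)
Your proposal is correct and matches the paper exactly: the paper offers no separate proof of Corollary \ref{CR: GN-Ineq-s on graded Lie Groups}, simply stating that it is the special case $q=2$, $\alpha=1$ of Theorem \ref{LM: GN-Ineq-y}, which is precisely your argument. Your additional bookkeeping --- verifying $\alpha=1\in\left(\frac{1}{2},1\right]$, noting that $u(a)=0$ is built into $\dot{H}_{+}^{1}(a,b)$, and identifying $\partial^{1}_{a+}u=u'$ so that $\|\partial^{1}_{a+}u\|_{L^{2}(a,b)}$ is the norm written as $\|u\|_{\dot{H}_{+}^{1}(a,b)}$ --- is exactly the implicit content the paper leaves to the reader.
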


We also recall another more general special case of Theorem \ref{LM: GN-Ineq-y} with $q=2$:
\begin{cor}
Let $\alpha\in\left(\frac{1}{2},1\right]$. Assume also that $1\leq p<\infty$ and $s\in[0,1].$ Then we have the following Gagliardo-Nirenberg type inequality
\begin{equation}
\label{EQ: GN-inequality on graded Lie Groups}
\|u\|_{L^{\gamma}(a, b)}\lesssim \|u\|_{\dot{H}_{+}^{\alpha}(a, b)}^{s} \|u\|_{L^{p}(a, b)}^{1-s},
\end{equation}
for $\frac{1}{\gamma}=\frac{s}{2}+\frac{1-s}{p}$.
\end{cor}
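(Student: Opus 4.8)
The plan is to observe that this Corollary is simply the special case $q=2$ of Theorem~\ref{LM: GN-Ineq-y}. First I would substitute $q=2$ into the hypotheses of that theorem and check that every condition matches the present statement. The range condition $\alpha\in\left(\frac{1}{q},1\right]$ becomes exactly $\alpha\in\left(\frac{1}{2},1\right]$, and the admissibility relation
$$\frac{\gamma s}{q}+\frac{\gamma(1-s)}{p}=1$$
collapses to $\frac{\gamma s}{2}+\frac{\gamma(1-s)}{p}=1$, which upon dividing by $\gamma$ reads precisely $\frac{1}{\gamma}=\frac{s}{2}+\frac{1-s}{p}$, the constraint appearing in the Corollary. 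The remaining assumptions $1\leq p<\infty$ and $s\in[0,1]$ are common to both.

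Next I would unfold the homogeneous norm. By the definition of the space $\dot{H}_{+}^{\alpha}(a,b)$, and since the condition $u(a)=0$ forces the seminorm $\|\partial^{\alpha}_{a+}u\|_{L^{2}(a,b)}$ to be a genuine norm (if $\partial^{\alpha}_{a+}u=0$ then Property~\ref{prope2} gives $u(t)-u(a)=I^{\alpha}_{a+}\partial^{\alpha}_{a+}u(t)=0$, whence $u\equiv 0$), one identifies
$$\|u\|_{\dot{H}_{+}^{\alpha}(a,b)}=\|\partial^{\alpha}_{a+}u\|_{L^{2}(a,b)}.$$
Inserting this identity into the conclusion of Theorem~\ref{LM: GN-Ineq-y} with $q=2$ yields
$$\|u\|_{L^{\gamma}(a,b)}\leq C\|\partial^{\alpha}_{a+}u\|_{L^{2}(a,b)}^{s}\|u\|_{L^{p}(a,b)}^{1-s}=C\|u\|_{\dot{H}_{+}^{\alpha}(a,b)}^{s}\|u\|_{L^{p}(a,b)}^{1-s},$$
which is the desired inequality, the symbol $\lesssim$ absorbing the constant $C$.

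This argument presents no genuine obstacle, as it is a direct specialization followed by a change of notation. The only point meriting care is the bookkeeping of the implicit constant: I would verify that the constant $C$ furnished by Theorem~\ref{LM: GN-Ineq-y} (which traces back through the H\"older step to the Poincar\'e--Sobolev estimate \eqref{poinineq}, and hence depends only on $a,b,\alpha,p,s$) is indeed independent of $u$, so that writing $\lesssim$ in place of an explicit constant is fully justified.
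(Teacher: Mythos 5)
Your proposal is correct and matches the paper exactly: the paper presents this corollary with no separate proof, simply noting it is the special case $q=2$ of Theorem \ref{LM: GN-Ineq-y}, which is precisely your argument. Your extra remarks on unfolding the $\dot{H}_{+}^{\alpha}$ norm and tracking the constant are sound but not needed beyond the specialization itself.
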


\subsection{Caffarelli-Kohn-Nirenberg type inequality} Now let us show the fractional Caffarelli-Kohn-Nirenberg type inequality.

\begin{theorem}\label{CKN}
 Assume that $a>0$, $\alpha\in\left(1-\frac{1}{q},1\right)$, $1<p,q<\infty$, $0<r<\infty,$ and $p+q\geq r$. Let $\delta\in [0,1]\cap[\frac{r-q}{r},\frac{p}{r}]$ and $c,d,e\in \mathbb{R}$ with  $\frac{\delta}{p}+\frac{1-\delta}{q}=\frac{1}{r}$,
$c=\delta(d-1)+e(1-\delta)$ and $u(a)=0$. If $1+(d-1)p>0$ then we have
\begin{equation}\label{CKNpqr}
\|x^{c}u\|_{L^{r}(a,b)}\leq C\|x^{d}\partial^{\alpha}_{a+}u\|^{\delta}_{L^{p}(a,b)}\|x^{e}u\|^{1-\delta}_{L^{q}(a,b)}.
\end{equation}
\end{theorem}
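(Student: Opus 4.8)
The plan is to reduce \eqref{CKNpqr} to the weighted Hardy inequality already established in Theorem \ref{weiHar} by means of a single Hölder interpolation. The algebraic hypotheses $c=\delta(d-1)+e(1-\delta)$ and $\frac{\delta}{p}+\frac{1-\delta}{q}=\frac1r$ are exactly what is needed to split the weight $x^{c}$ and the exponent $r$ compatibly, after which the factor carrying the derivative information is handled by the fractional Hardy estimate.

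First I would perform the interpolation. Writing
\begin{equation*}
\int_{a}^{b}|x^{c}u|^{r}\,dx=\int_{a}^{b}\bigl|x^{d-1}u\bigr|^{r\delta}\bigl|x^{e}u\bigr|^{r(1-\delta)}\,dx,
\end{equation*}
which is legitimate precisely because $cr=(d-1)r\delta+er(1-\delta)$, I would apply the Hölder inequality with the conjugate exponents $\frac{p}{r\delta}$ and $\frac{q}{r(1-\delta)}$; these are admissible (both $\geq 1$, with reciprocals $\frac{r\delta}{p}+\frac{r(1-\delta)}{q}$ summing to $1$) exactly when $\delta\in\left[\frac{r-q}{r},\frac{p}{r}\right]$ and $\frac{\delta}{p}+\frac{1-\delta}{q}=\frac1r$, which are among the hypotheses. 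This yields
\begin{equation*}
\|x^{c}u\|_{L^{r}(a,b)}\leq \bigl\|x^{d-1}u\bigr\|_{L^{p}(a,b)}^{\delta}\,\|x^{e}u\|_{L^{q}(a,b)}^{1-\delta},
\end{equation*}
so that the only remaining task is to estimate the first factor.

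The main step, and the one where the fractional structure really enters, is to bound $\bigl\|x^{d-1}u\bigr\|_{L^{p}(a,b)}$ by $C\bigl\|x^{d}\partial^{\alpha}_{a+}u\bigr\|_{L^{p}(a,b)}$. I would obtain this directly from Theorem \ref{weiHar} applied with $\gamma=-d$: indeed $\frac{u}{x^{\gamma+1}}=x^{d-1}u$ and $\frac{\partial^{\alpha}_{a+}u}{x^{\gamma}}=x^{d}\partial^{\alpha}_{a+}u$, so \eqref{weighthar} gives exactly
\begin{equation*}
\bigl\|x^{d-1}u\bigr\|_{L^{p}(a,b)}\leq C_{H}\bigl\|x^{d}\partial^{\alpha}_{a+}u\bigr\|_{L^{p}(a,b)},
\end{equation*}
with $C_{H}$ the explicit constant from \eqref{weighthar}. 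Substituting this into the interpolation bound and setting $C=C_{H}^{\delta}$ produces \eqref{CKNpqr}. The delicate point is to check compatibility of the hypotheses with this application: $u(a)=0$ and $a>0$ are needed to have the representation $u=I^{\alpha}_{a+}\partial^{\alpha}_{a+}u$ and to keep the weights bounded, while the restriction on $\alpha$ and the condition $1+(d-1)p>0$ are precisely what guarantee finiteness of the kernel/weight integral underlying the Hardy estimate. Should one prefer to avoid citing Theorem \ref{weiHar}, the same bound can be derived from scratch by inserting $\partial^{\alpha}_{a+}u(s)=s^{-d}\bigl(s^{d}\partial^{\alpha}_{a+}u(s)\bigr)$ into $u=I^{\alpha}_{a+}\partial^{\alpha}_{a+}u$ and applying Hölder in $s$; the resulting Beta-type integral $\int_{a}^{x}(x-s)^{(\alpha-1)\frac{p}{p-1}}s^{-d\frac{p}{p-1}}\,ds$ converges under precisely the stated constraints on $\alpha$ and $d$, and this is where I expect the only genuine work to lie.
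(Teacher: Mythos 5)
Your proposal is correct and follows essentially the same route as the paper: a single H\"older interpolation with exponents $\frac{p}{r\delta}$ and $\frac{q}{r(1-\delta)}$ splitting $x^{c}u$ into $(x^{d-1}u)^{\delta}(x^{e}u)^{1-\delta}$, followed by the weighted Hardy inequality of Theorem \ref{weiHar} with $\gamma=-d$ applied to the first factor. The paper merely records the endpoint cases $\delta=0$ and $\delta=1$ separately, which your argument subsumes.
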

\begin{proof}
Case $\delta=0$.

If $\delta=0,$ then $c=e$ and $q=r$. Then  \eqref{CKNpqr} is the inequality
$$\|x^{c}u\|_{L^{r}(a,b)}\leq \|x^{c}u\|_{L^{r}(a,b)}.$$

Case $\delta=1$.

If $\delta=1$, then we have $c=d-1$ and $p=r$. Also, we have $1+cp=1+(d-1)p>0$. Then by using weighted fractional Hardy inequality (Theorem \ref{weiHar}), we obtain
\begin{equation}
\begin{split}
 \left\|x^{c}u\right\|_{L^{p}(a,b)}&\leq C\left\|x^{c+1}\partial^{\alpha}_{a+}u\right\|_{L^{p}(a,b)}
 \\&= C\left\|x^{d}\partial^{\alpha}_{a+}u\right\|_{L^{p}(a,b)}.
\end{split}
\end{equation}

Case $\delta\in [0,1]\cap[\frac{r-q}{r},\frac{p}{r}]$.

By assumption $c=\delta(d-1)+e(1-\delta)$ and by using H\"{o}lder's inequality with $\frac{\delta}{p}+\frac{1-\delta}{q}=\frac{1}{r}$, we calculate
\begin{equation}
\begin{split}
\|x^{c}u\|_{L^{r}(a,b)}&=\left(\int_{a}^{b}x^{cr}|u(x)|^{r}dx\right)^{\frac{1}{r}}
\\&=\left(\int_{a}^{b}\frac{|u(x)|^{\delta r}}{x^{\delta r(1-d)}}\frac{|u(x)|^{(1-\delta) r}}{x^{-er(1-\delta)}}dx\right)^{\frac{1}{r}}
\\&\leq\left\|\frac{u}{x^{1-d}}\right\|^{\delta}_{L^{p}(a,b)}\left\|\frac{u}{x^{-e}}\right\|^{1-\delta}_{L^{q}(a,b)}.
\end{split}
\end{equation}
By using weighted fractional Hardy inequality (Theorem \ref{weiHar}) with $1+(d-1)p>0$, we obtain
\begin{equation}
\begin{split}
\|x^{c}u\|_{L^{r}(a,b)}&\leq \left\|\frac{u}{x^{1-d}}\right\|^{\delta}_{L^{p}(a,b)}\left\|\frac{u}{x^{-e}}\right\|^{1-\delta}_{L^{q}(a,b)}
\\&\leq C\|x^{d}\partial^{\alpha}_{a+}u\|^{\delta}_{L^{p}(a,b)}\|x^{e}u\|^{1-\delta}_{L^{q}(a,b)},
\end{split}
\end{equation}
completing the proof.
\end{proof}


\section{Sequential Derivation Case}

In this subsection we collect results for the sequential derivatives. Indeed, these results are important due to the non--commutativity and the absence of the semi--group property of fractional differential operators.
\subsection{Fractional Poincare--Sobolev type inequality}
\begin{theorem}\label{spropsob} Let $\partial^{\beta}_{a+}u(a)=0$, $\partial^{\alpha}_{a+}\partial^{\beta}_{a+}u\in L^p(a,b)$
with  $\alpha \in\left(\frac{1}{q},1\right)$ and $\beta\in (0,1).$ Then the following inequality is true
\begin{equation}\label{se1}
\|\partial^{\beta}_{a+}u\|_{L^{\infty}(a,b)}\leq \frac{(b-a)^{\alpha-\frac{1}{p}}}{\left(\alpha q-q+1\right)^\frac{1}{q}\Gamma(\alpha)} \left\|\partial^{\alpha}_{a+}\partial^{\beta}_{a+}u\right\|_{L^p(a,b)},
\end{equation}
with $\frac{1}{p}+\frac{1}{q}=1$.
\end{theorem}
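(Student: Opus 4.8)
The plan is to reduce the statement to the Poincar\'{e}--Sobolev inequality of Theorem \ref{prop1cap} by regarding the sequential derivative $\partial^{\beta}_{a+}u$ as a single function. Set $v:=\partial^{\beta}_{a+}u$. The hypothesis $\partial^{\beta}_{a+}u(a)=0$ says exactly that $v(a)=0$, and the hypothesis $\partial^{\alpha}_{a+}\partial^{\beta}_{a+}u\in L^{p}(a,b)$ says that $\partial^{\alpha}_{a+}v\in L^{p}(a,b)$. Thus $v$ satisfies the hypotheses of Theorem \ref{prop1cap} with order $\alpha$ and integrability exponent $p$, and applying that theorem to $v$ yields
\[
\|\partial^{\beta}_{a+}u\|_{L^{\infty}(a,b)}=\|v\|_{L^{\infty}(a,b)}\leq \frac{(b-a)^{\alpha-\frac{1}{p}}}{\left(\alpha q-q+1\right)^{\frac{1}{q}}\Gamma(\alpha)}\,\|\partial^{\alpha}_{a+}v\|_{L^{p}(a,b)},
\]
which is precisely \eqref{se1} after re-expanding $v=\partial^{\beta}_{a+}u$ and recalling the identity $\left(\tfrac{\alpha p}{p-1}-\tfrac{1}{p-1}\right)^{\frac{p-1}{p}}=(\alpha q-q+1)^{1/q}$ already used in Theorem \ref{prop1cap}.

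Concretely, the one step I would make explicit is the integral representation underlying that reduction. Since $v(a)=0$, Property \ref{prope2} gives $v(t)=I^{\alpha}_{a+}\partial^{\alpha}_{a+}v(t)$, that is,
\[
\partial^{\beta}_{a+}u(t)=I^{\alpha}_{a+}\partial^{\alpha}_{a+}\partial^{\beta}_{a+}u(t)=\frac{1}{\Gamma(\alpha)}\int_{a}^{t}(t-s)^{\alpha-1}\,\partial^{\alpha}_{a+}\partial^{\beta}_{a+}u(s)\,ds .
\]
From here the H\"{o}lder inequality with $\tfrac1p+\tfrac1q=1$ applied to the pair $(t-s)^{\alpha-1}$ and $\partial^{\alpha}_{a+}\partial^{\beta}_{a+}u(s)$, together with the elementary evaluation $\int_{a}^{t}(t-s)^{(\alpha-1)q}\,ds=\tfrac{(t-a)^{(\alpha-1)q+1}}{(\alpha-1)q+1}$ and the uniform bound $t-a\leq b-a$, reproduces the constant in \eqref{se1}. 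The only place the order threshold enters is the convergence of this last integral, which requires $(\alpha-1)q+1=\alpha q-q+1>0$; this integrability constraint is precisely what pins down the admissible lower bound on $\alpha$.

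The computation after the representation is routine and identical to the proof of Theorem \ref{prop1cap}, so the only genuine point requiring care is the validity of the representation $\partial^{\beta}_{a+}u(t)=I^{\alpha}_{a+}\partial^{\alpha}_{a+}\partial^{\beta}_{a+}u(t)$ in the sequential setting. This is exactly where the non-commutativity and the lack of a semigroup law for fractional derivatives, emphasised at the start of this section, must be respected: one may \emph{not} collapse $\partial^{\alpha}_{a+}\partial^{\beta}_{a+}$ into $\partial^{\alpha+\beta}_{a+}$, and must instead apply Property \ref{prope2} directly to the function $v=\partial^{\beta}_{a+}u$, checking that $v$ (together with $v'$) is integrable enough for that inversion identity to hold and that $v(a)=0$. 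Once this representation is justified, no further structural input is needed and the estimate follows.
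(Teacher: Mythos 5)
Your proposal is correct and follows essentially the same route as the paper: the paper's proof likewise starts from the representation $\partial^{\beta}_{a+}u(t)=I^{\alpha}_{a+}\partial^{\alpha}_{a+}\partial^{\beta}_{a+}u(t)$ and runs the identical H\"older computation, so your reduction to Theorem \ref{prop1cap} applied to $v=\partial^{\beta}_{a+}u$ is just a tidier packaging of the same argument. Your remark that the convergence of $\int_a^t(t-s)^{\alpha q-q}\,ds$ is what fixes the admissible lower bound on $\alpha$ is exactly the right point of care.
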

\begin{proof} Consider the function
\begin{equation}\partial^{\beta}_{a+}u(t)=I^{\alpha}_{a+}\partial^{\alpha}_{a+}\partial^{\beta}_{a+}u(t).\end{equation}
Using the H\"{o}lder inequality, one has
\begin{align*}
\left|I^{\alpha}_{a+}\partial^{\alpha}_{a+}\partial^{\beta}_{a+}u(t)\right|&\leq \frac{1}{\Gamma(\alpha)}\int\limits_a^t\left|(t-s)^{\alpha-1}\partial^{\alpha}_{a+}\partial^{\beta}_{a+}u(s)\right|ds\\&\leq \frac{1}{\Gamma(\alpha)}\left(\int\limits_a^t(t-s)^{\alpha q-q} ds \right)^{\frac{1}{q}} \left(\int\limits_a^t \left|\partial^{\alpha}_{a+}\partial^{\beta}_{a+}u(s)\right|^p ds\right)^{\frac{1}{p}}\\&=\frac{(t-a)^{\alpha-1+\frac{1}{q}}}{\left(\alpha q-q+1\right)^\frac{1}{q}\Gamma(\alpha)} \left(\int\limits_a^t \left|\partial^{\alpha}_{a+}\partial^{\beta}_{a+}u(s)\right|^p ds\right)^{\frac{1}{p}}\\&\leq \frac{(b-a)^{\alpha-1+\frac{1}{q}}}{\left(\alpha q-q+1\right)^\frac{1}{q}\Gamma(\alpha)} \left\|\partial^{\alpha}_{a+}\partial^{\beta}_{a+}u\right\|_{L^p(a,b)}.\end{align*}
Then we obtain
\begin{align*}
\|\partial^{\beta}_{a+}u\|_{L^{\infty}(a,b)}&\leq \frac{(b-a)^{\alpha-\frac{1}{p}}}{\left(\alpha q-q+1\right)^\frac{1}{q}\Gamma(\alpha)} \left\|\partial^{\alpha}_{a+}\partial^{\beta}_{a+}u\right\|_{L^p(a,b)},
\end{align*}
completing proof.
\end{proof}
\begin{remark}
If $1<\theta<\infty$ in Theorem \ref{spropsob}, then we have
\begin{align*}
\|\partial^{\beta}_{a+}u\|_{L^{\theta}(a,b)}&\leq \frac{(b-a)^{\alpha-\frac{1}{p}+\frac{1}{\theta}}}{\left(\alpha q-q+1\right)^\frac{1}{q}\Gamma(\alpha)} \left\|\partial^{\alpha}_{a+}\partial^{\beta}_{a+}u\right\|_{L^p(a,b)}.
\end{align*}
\end{remark}
\subsection{Fractional Hardy type inequality} Now we show the following sequential fractional Hardy inequality.
\begin{theorem} Let $a>0$, $\gamma\in\mathbb{R}$ and $\partial^{\beta}_{a+}u(a)=0$ and $\partial^{\alpha}_{a+}\partial^{\beta}_{a+}u\in L^p(a,b)$
with $\alpha \in\left(\frac{1}{q},1\right).$  Then the following inequality is true
\begin{equation}\label{e1seq}
\left\|\frac{\partial^{\beta}_{a+}u}{x}\right\|_{L^p(a,b)}\leq C \left\|\partial^{\alpha}_{a+}\partial^{\beta}_{a+}u\right\|_{L^p(a,b)},
\end{equation}
with $\frac{1}{p}+\frac{1}{q}=1$.
\end{theorem}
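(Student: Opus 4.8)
The plan is to mirror the proof of the non-sequential Hardy inequality in Theorem~\ref{thmharcap}, replacing the role of $u$ there by the function $v:=\partial^{\beta}_{a+}u$ and the role of $\partial^{\alpha}_{a+}u$ by $\partial^{\alpha}_{a+}\partial^{\beta}_{a+}u=\partial^{\alpha}_{a+}v$. The hypotheses are tailored for exactly this substitution: $\partial^{\beta}_{a+}u(a)=0$ gives the boundary condition $v(a)=0$, the assumption $\partial^{\alpha}_{a+}\partial^{\beta}_{a+}u\in L^p(a,b)$ gives $\partial^{\alpha}_{a+}v\in L^p(a,b)$, and $\alpha\in\left(\frac{1}{q},1\right)$ is precisely the range required by the sequential Poincar\'e--Sobolev estimate \eqref{se1}.

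First I would exploit the one-dimensional weight. Since $a>0$ and $a<x<b$ we have $1/x<1/a$, hence pointwise $x^{-p}\le a^{-p}$, which yields
\begin{equation*}
\left\|\frac{\partial^{\beta}_{a+}u}{x}\right\|_{L^p(a,b)}=\left(\int_a^b x^{-p}\,|\partial^{\beta}_{a+}u(x)|^p\,dx\right)^{1/p}\le a^{-1}\,\|\partial^{\beta}_{a+}u\|_{L^p(a,b)}.
\end{equation*}
This reduces the claim to bounding the plain $L^p$-norm of $\partial^{\beta}_{a+}u$. Next I would pass from the $L^p$-norm to the sup-norm and invoke Theorem~\ref{spropsob}: on the finite interval one has $\|\partial^{\beta}_{a+}u\|_{L^p(a,b)}\le (b-a)^{1/p}\|\partial^{\beta}_{a+}u\|_{L^\infty(a,b)}$, and \eqref{se1} controls the latter by
\begin{equation*}
\|\partial^{\beta}_{a+}u\|_{L^\infty(a,b)}\le \frac{(b-a)^{\alpha-\frac{1}{p}}}{(\alpha q-q+1)^{1/q}\,\Gamma(\alpha)}\,\|\partial^{\alpha}_{a+}\partial^{\beta}_{a+}u\|_{L^p(a,b)}.
\end{equation*}
Equivalently, one may apply the $L^\theta$-version recorded in the remark following Theorem~\ref{spropsob} with $\theta=p$, which produces the factor $(b-a)^{\alpha}$ directly. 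Chaining the three estimates gives \eqref{e1seq} with the explicit constant $C=\dfrac{a^{-1}(b-a)^{\alpha}}{(\alpha q-q+1)^{1/q}\Gamma(\alpha)}$, matching the shape of the constant in Theorem~\ref{thmharcap}.

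Since every step is a direct transcription of an already established inequality, I expect no genuine obstacle. The only point that needs care is checking that the hypotheses on $u$ really activate Theorem~\ref{spropsob} for $v=\partial^{\beta}_{a+}u$ — in particular the boundary condition $\partial^{\beta}_{a+}u(a)=0$, which is what legitimizes the representation $\partial^{\beta}_{a+}u=I^{\alpha}_{a+}\partial^{\alpha}_{a+}\partial^{\beta}_{a+}u$ through Properties~\ref{prope1} and~\ref{prope2} (this is where the sequential, non-semigroup structure enters, since one must read $\partial^{\alpha}_{a+}\partial^{\beta}_{a+}$ as a genuine composition rather than a single fractional derivative). The parameter $\gamma$ in the statement plays no role in this unweighted bound and can be ignored; a weighted version following the pattern of Theorem~\ref{weiHar} would be the natural place for it to enter.
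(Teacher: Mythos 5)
Your proposal is correct and follows essentially the same route as the paper: the pointwise bound $x^{-p}\le a^{-p}$ reduces the weighted norm to $a^{-1}\|\partial^{\beta}_{a+}u\|_{L^{p}(a,b)}$, which is then controlled via the sequential Poincar\'e--Sobolev estimate \eqref{se1} (the paper leaves the intermediate factor $(b-a)^{1/p}$ from passing $L^{p}\to L^{\infty}$ implicit, which you spell out, arriving at the same constant $C=\frac{a^{-1}(b-a)^{\alpha}}{(\alpha q-q+1)^{1/q}\Gamma(\alpha)}$). Your observation that $\gamma$ plays no role in the unweighted statement also matches the paper, where it is indeed never used.
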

\begin{proof}
From $a< x< b$ we have $\frac{1}{b}<\frac{1}{x}<\frac{1}{a}.$ By using Theorem \ref{spropsob}, we calculate
\begin{equation}\label{hhh}
\begin{split}
\left(\int_{a}^{b}\frac{|\partial^{\beta}_{a+} u(x)|^{p}}{x^{p}}dx\right)^{\frac{1}{p}}&=\left(\int_{a}^{b}x^{-p}|\partial^{\beta}_{a+}u(x)|^{p}dx\right)^{\frac{1}{p}}\\&
\leq a^{-1}\|\partial^{\beta}_{a+} u\|_{L^{p}(a,b)}\\&
\stackrel{\eqref{se1}}\leq\frac{a^{-1}(b-a)^{\alpha}}{\left(\alpha q-q+1\right)^\frac{1}{q}\Gamma(\alpha)} \left\|\partial^{\alpha}_{a+} \partial^{\beta}_{a+}u\right\|_{L^p(a,b)},
\end{split}
\end{equation}
showing \eqref{e1seq}.
\end{proof}

\subsection{Fractional Gagliardo-Nirenberg type inequality} In the same way as Theorem \ref{LM: GN-Ineq-y} is proved, we can prove the following statement.

\begin{theorem}\label{TH-seq-GN}
Assume that $1\leq p, q<\infty$, and let $\alpha\in(0,1)$ be such that $\beta\in\left(\frac{1}{q},1\right)$. Suppose that $\partial^{\alpha}_{a+}\partial^{\beta}_{a+}u\in L^{q}(a,b)$ and $\partial^{\alpha}_{a}u\in L^{p}(a,b)$.
Then we have the following Gagliardo-Nirenberg type inequality
\begin{equation}
\label{EQ: GN-seq-inequality-2}
\int_{a}^{b}|\partial^{\alpha}_{a+}u(x)|^{\gamma}dx\lesssim \left(\int_{a}^{b}|\partial^{\beta}_{a+}\partial^{\alpha}_{a+}u(x)|^{q}dx\right)^{\frac{s\gamma}{q}} \left(\int_{a}^{b}|\partial^{\alpha}_{a+}u(x)|^{p}dx\right)^{\frac{(1-s)\gamma}{p}},
\end{equation}
with
\begin{equation}
\frac{s\gamma}{q}+\frac{(1-s)\gamma}{p}=1,
\end{equation}
where $s\in[0,1].$
\end{theorem}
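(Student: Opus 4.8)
The plan is to obtain \eqref{EQ: GN-seq-inequality-2} as a direct instance of Theorem \ref{LM: GN-Ineq-y}, applied to the auxiliary function $v:=\partial^{\alpha}_{a+}u$ in place of $u$, with the order $\beta$ playing the role that $\alpha$ had in Theorem \ref{LM: GN-Ineq-y}. Introducing $v$ is precisely the device that respects the non-commutativity of fractional derivatives stressed in this section: writing the inner derivative first, $v=\partial^{\alpha}_{a+}u$, we have $\partial^{\beta}_{a+}v=\partial^{\beta}_{a+}\partial^{\alpha}_{a+}u$, so the Poincar\'e--Sobolev estimate of Theorem \ref{prop1cap} gets applied to $v$ and to $\partial^{\beta}_{a+}v$ in exactly the order appearing in the conclusion.

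First I would verify that $v$ meets the hypotheses needed to replay the argument of Theorem \ref{LM: GN-Ineq-y}: the vanishing condition $v(a)=\partial^{\alpha}_{a+}u(a)=0$, the integrability $v=\partial^{\alpha}_{a+}u\in L^{p}(a,b)$ and $\partial^{\beta}_{a+}v=\partial^{\beta}_{a+}\partial^{\alpha}_{a+}u\in L^{q}(a,b)$, together with $\beta\in\left(\frac{1}{q},1\right)$, which is exactly the admissibility condition (the derivation order exceeding $\frac{1}{q}$) required to apply the $L^{q}$-based Poincar\'e--Sobolev estimate of Theorem \ref{prop1cap} with derivative order $\beta$ and base exponent $q$. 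Each of these translates back into the stated assumptions once $v=\partial^{\alpha}_{a+}u$ is substituted.

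The core of the argument then mirrors the proof of Theorem \ref{LM: GN-Ineq-y} verbatim. Using the exponent relation $\frac{s\gamma}{q}+\frac{(1-s)\gamma}{p}=1$, I would split $|v|^{\gamma}=|v|^{\gamma s}\,|v|^{\gamma(1-s)}$ and apply H\"older's inequality with the conjugate exponents $\frac{q}{\gamma s}$ and $\frac{p}{\gamma(1-s)}$, obtaining
\begin{equation*}
\int_{a}^{b}|v(x)|^{\gamma}\,dx\leq\left(\int_{a}^{b}|v(x)|^{q}\,dx\right)^{\frac{s\gamma}{q}}\left(\int_{a}^{b}|v(x)|^{p}\,dx\right)^{\frac{(1-s)\gamma}{p}}.
\end{equation*}
Finally I would invoke the Poincar\'e--Sobolev bound \eqref{poinineq}, in its $L^{q}$ form from the Remark following Theorem \ref{prop1cap}, to estimate $\|v\|_{L^{q}(a,b)}\leq C\|\partial^{\beta}_{a+}v\|_{L^{q}(a,b)}$, and then substitute $v=\partial^{\alpha}_{a+}u$ and $\partial^{\beta}_{a+}v=\partial^{\beta}_{a+}\partial^{\alpha}_{a+}u$ to reach \eqref{EQ: GN-seq-inequality-2}.

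The computation is entirely routine once this reduction is in place; the only genuine subtlety is bookkeeping, namely ensuring that the two derivatives are composed in the order $\partial^{\beta}_{a+}\partial^{\alpha}_{a+}u$ rather than the reverse, and that the vanishing condition is imposed on $\partial^{\alpha}_{a+}u$ at $t=a$ rather than on $u$ itself. Both requirements are handled automatically by the choice $v=\partial^{\alpha}_{a+}u$, which is exactly why the result follows \emph{in the same way} as Theorem \ref{LM: GN-Ineq-y}.
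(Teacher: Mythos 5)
Your proposal is correct and follows essentially the same route as the paper: the paper also splits $|\partial^{\alpha}_{a+}u|^{\gamma}=|\partial^{\alpha}_{a+}u|^{s\gamma}|\partial^{\alpha}_{a+}u|^{(1-s)\gamma}$, applies H\"older with the stated exponent relation, and then bounds the $L^{q}$ factor by the sequential Poincar\'e--Sobolev inequality (Theorem \ref{spropsob}, which is precisely Theorem \ref{prop1cap} applied to the auxiliary function $v=\partial^{\alpha}_{a+}u$ as you do). Your explicit remark that one must impose $\partial^{\alpha}_{a+}u(a)=0$ for the reduction to go through is a point the paper's statement leaves implicit, but the argument is otherwise identical.
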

\begin{proof}
Let us calculate the following integral
\begin{equation}
\begin{split}
\int_{a}^{b}|\partial^{\alpha}_{a+}u(x)|^{\gamma}dx&=\int_{a}^{b}|\partial^{\alpha}_{a+}u(x)|^{s\gamma}|\partial^{\alpha}_{a+}u(x)|^{(1-s)\gamma}dx
\\&\leq \left(\int_{a}^{b}|\partial^{\alpha}_{a+}u(x)|^{q}dx\right)^{\frac{s\gamma}{q}}\left(\int_{a}^{b}|\partial^{\alpha}_{a+}u(x)|^{p}dx\right)^{\frac{(1-s)\gamma}{p}},
\end{split}
\end{equation}
with
\begin{equation}
\frac{s\gamma}{q}+\frac{(1-s)\gamma}{p}=1.
\end{equation}
Then by using Theorem \ref{spropsob}, we obtain
\begin{equation}
\begin{split}
\int_{a}^{b}|\partial^{\alpha}_{a+}u(x)|^{\gamma}dx&
\leq\left(\int_{a}^{b}|\partial^{\alpha}_{a+}u(x)|^{q}dx\right)^{\frac{s\gamma}{q}}\left(\int_{a}^{b}|\partial^{\alpha}_{a+}u(x)|^{p}dx\right)^{\frac{(1-s)\gamma}{p}}\\&
\stackrel{\eqref{e1seq}}\leq C \left(\int_{a}^{b}|\partial^{\beta}_{a+}\partial^{\alpha}_{a+}u(x)|^{q}dx\right)^{\frac{s\gamma}{q}} \left(\int_{a}^{b}|\partial^{\alpha}_{a+}u(x)|^{p}dx\right)^{\frac{(1-s)\gamma}{p}}.
\end{split}
\end{equation}
The theorem is proved.
\end{proof}
\section{Hadamard fractional derivative}
Let us give the definition of the Hadamard fractional derivative.

\begin{definition} The left  Hadamard
fractional integral $\mathfrak{I}_{a+} ^\alpha$ of order $\alpha>0$, and derivative $\mathfrak{D}_{a+} ^\alpha$  of order $0<\alpha<1$ are given by
$$\mathfrak{I}_{a+} ^\alpha  \left[ f \right]\left(t\right)=\frac{1}{\Gamma \left(\alpha \right)}\int_{a}^{t} \left(\log\frac{t}{s}\right)^{\alpha-1}  f\left( s \right) \frac{ds}{s}, \,\,\, t\in(a,b],$$
and
$$\mathfrak{D}_{a+}^{\alpha} \left[ f \right]\left( t \right) = \frac{1}{\Gamma(1-\alpha)}\int_{a}^{t}\left(\log \frac{t}{s}\right)^{-\alpha}f'\left( s \right) \frac{ds}{s}. \,\,\, t\in(a, b].$$
 Here $\Gamma$ denotes the Euler gamma function.
\end{definition}

\begin{prop}[\cite{1}]\label{prope2had} If $f\in L^1(a,b)$ and $f'\in L_{\frac{1}{x}}^1(a,b),$ then the equality $$\mathfrak{I}_a^\alpha \mathfrak{D}_{a+} ^\alpha f(t)=f(t)- f(a), \, 0<\alpha<1,$$ holds almost everywhere on $[a,b].$\end{prop}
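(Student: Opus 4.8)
The plan is to reduce the Hadamard statement to its already-established Riemann--Liouville/Caputo counterpart, Property \ref{prope2}, by means of the logarithmic change of variables that intertwines the two calculi. Concretely, I would set $x=\log t$, $\sigma=\log s$, and $g(x):=f(e^{x})$, so that the Hadamard kernel $\left(\log\frac{t}{s}\right)^{\alpha-1}$ becomes $(x-\sigma)^{\alpha-1}$ and the Hadamard measure $\frac{ds}{s}$ becomes $d\sigma$. Under this substitution the Hadamard integral turns into a Riemann--Liouville integral of the transformed function, namely $\mathfrak{I}_{a+}^{\alpha}[f](t)=I_{(\log a)+}^{\alpha}[g](\log t)$, and the Caputo--Hadamard derivative turns into the Caputo derivative $\mathfrak{D}_{a+}^{\alpha}[f](t)=\partial_{(\log a)+}^{\alpha}[g](\log t)$. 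Granting these two identities, the desired equality is immediate: applying Property \ref{prope2} to $g$ gives $I_{(\log a)+}^{\alpha}\partial_{(\log a)+}^{\alpha}g=g-g(\log a)$ almost everywhere, and transforming back yields $\mathfrak{I}_a^{\alpha}\mathfrak{D}_{a+}^{\alpha}f(t)=f(t)-f(a)$, since $g(\log a)=f(a)$.

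I would carry this out in the following order. First, record the Hadamard semigroup property $\mathfrak{I}_{a+}^{\alpha}\mathfrak{I}_{a+}^{\beta}=\mathfrak{I}_{a+}^{\alpha+\beta}$, the exact analogue of Property \ref{prope1}, which follows at once from the Riemann--Liouville version once the change of variables is in place. Second, verify the representation $\mathfrak{D}_{a+}^{\alpha}f=\mathfrak{I}_{a+}^{1-\alpha}[\delta f]$, where $\delta=s\frac{d}{ds}$ is the natural Hadamard differential operator and $\delta f(s)=s f'(s)$; this is the Hadamard counterpart of the identity $\partial_{a+}^{\alpha}f=I_{a+}^{1-\alpha}f'$ from Definition \ref{fracderd}. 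Composing and using the semigroup property then collapses the problem to computing $\mathfrak{I}_{a+}^{1}[\delta f](t)=\int_{a}^{t}\delta f(s)\,\frac{ds}{s}=\int_{a}^{t}f'(s)\,ds=f(t)-f(a)$. Third, check that the integrability hypotheses transfer correctly: $f\in L^{1}(a,b)$ and $f'\in L^{1}_{1/x}(a,b)$ are precisely what is needed for $g\in L^{1}(\log a,\log b)$ and $g'\in L^{1}(\log a,\log b)$, so that Property \ref{prope2} applies to $g$.

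The main obstacle, and the only point requiring genuine care, is the bookkeeping of weights and measures in the change of variables, specifically confirming that the Hadamard measure $\frac{ds}{s}$ and the scaling operator $\delta=s\frac{d}{ds}$ conspire so that $\mathfrak{D}_{a+}^{\alpha}$ maps onto the Caputo derivative $\partial^{\alpha}$ of $g$ rather than onto some weighted variant. Using $g'(\sigma)=e^{\sigma}f'(e^{\sigma})=\delta f(s)$ together with $d\sigma=\frac{ds}{s}$, one sees that $\delta f(s)\,\frac{ds}{s}=g'(\sigma)\,d\sigma$, which is exactly the pairing that makes the two derivatives correspond; this is where the weight $\frac{ds}{s}$ in the definition is essential. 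The remaining subtlety is that the conclusion holds only almost everywhere, so I would justify the fundamental-theorem step $\int_a^t f'(s)\,ds=f(t)-f(a)$ in the a.e.\ (Lebesgue point) sense afforded by the absolute-continuity-type regularity implicit in the hypothesis $f'\in L^{1}_{1/x}(a,b)$, exactly as in the proof of Property \ref{prope2}.
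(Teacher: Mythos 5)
The paper does not actually prove this Property: it is quoted from Kilbas--Srivastava--Trujillo \cite{1}, so there is no internal argument to compare yours against. On its own terms your strategy is the standard one and is sound. The logarithmic substitution $t=e^{x}$, $g=f\circ\exp$ does intertwine the two calculi exactly as you say ($\frac{ds}{s}\mapsto d\sigma$, $\delta f(s)=sf'(s)=g'(\sigma)$), and the integrability transfer is harmless because for $a>0$ the spaces $L^{1}_{1/x}(a,b)$ and $L^{1}(a,b)$ coincide with equivalent norms. Your alternative direct route, $\mathfrak{I}^{\alpha}_{a+}\mathfrak{I}^{1-\alpha}_{a+}[\delta f]=\mathfrak{I}^{1}_{a+}[\delta f]=\int_a^t sf'(s)\frac{ds}{s}=f(t)-f(a)$, is even cleaner and needs only Property \ref{prope1had}; you do not really need the reduction to Property \ref{prope2} at all.

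The one point you must confront is that the representation $\mathfrak{D}^{\alpha}_{a+}f=\mathfrak{I}^{1-\alpha}_{a+}[\delta f]$, on which both of your arguments rest, does not match the definition printed in the paper. The paper defines
\begin{equation*}
\mathfrak{D}_{a+}^{\alpha}[f](t)=\frac{1}{\Gamma(1-\alpha)}\int_{a}^{t}\Bigl(\log \frac{t}{s}\Bigr)^{-\alpha}f'(s)\,\frac{ds}{s}=\mathfrak{I}^{1-\alpha}_{a+}[f'](t),
\end{equation*}
with $f'$ rather than $\delta f=sf'$ in the integrand. With that literal definition the composition gives $\mathfrak{I}^{\alpha}_{a+}\mathfrak{D}^{\alpha}_{a+}f=\mathfrak{I}^{1}_{a+}[f']=\int_a^t f'(s)\frac{ds}{s}$, which is not $f(t)-f(a)$: for $f(t)=t$ one obtains $\log(t/a)\neq t-a$. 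So either the displayed definition carries a typo (the measure should be $ds$, equivalently the integrand should be $\delta f(s)\frac{ds}{s}$, which is the Caputo--Hadamard derivative of \cite{1}) or the Property as stated is false. Your proof correctly establishes the statement for the standard definition, which is surely the intended one; just state the representation you are actually using as a lemma, verify it against \cite{1} rather than against the paper's displayed formula, and flag the discrepancy explicitly rather than asserting that the representation is ``the Hadamard counterpart'' of Definition \ref{fracderd} as if it were immediate from what is written.
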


Now for $p\geq1$ we define the weighted Lebesgue space $L^{p}_{\frac{1}{x}}(a,b)$ with the norm
\begin{equation}
    \|u\|_{L^{p}_{\frac{1}{x}}(a,b)}:=\left(\int_{a}^{b}|u(x)|^{p}\frac{dx}{x}\right)^{\frac{1}{p}}.
\end{equation}

For our further purpose we will need the following property of the weighted space $L^{p}_{\frac{1}{x}}(a,b)$.
\begin{prop}[\cite{1}]\label{prope1had} Suppose that $f\in L_{\frac{1}{x}}^1(a,b)$. Then for the parameters $\alpha>0$ and $\beta>0$ we have the following equality
$$
\mathfrak{I}_{a+}^\alpha \mathfrak{I}_{a+}^\beta f(t)= \mathfrak{I}_{a+}^{\alpha+\beta}f(t),
$$
for almost all $t\in(a, b)$.
\end{prop}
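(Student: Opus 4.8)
The plan is to reduce the claimed semigroup identity to a Beta-function computation, exploiting the fact that the Hadamard integral is a convolution in the logarithmic variable. First I would expand the left-hand side directly from the definition, writing
$$\mathfrak{I}_{a+}^\alpha \mathfrak{I}_{a+}^\beta f(t) = \frac{1}{\Gamma(\alpha)\Gamma(\beta)}\int_a^t \left(\log\frac{t}{s}\right)^{\alpha-1}\left(\int_a^s \left(\log\frac{s}{\tau}\right)^{\beta-1} f(\tau)\frac{d\tau}{\tau}\right)\frac{ds}{s}.$$
The integration region is the triangle $a < \tau < s < t$, so I would apply Fubini's theorem to interchange the order of integration, obtaining
$$\frac{1}{\Gamma(\alpha)\Gamma(\beta)}\int_a^t f(\tau)\left(\int_\tau^t \left(\log\frac{t}{s}\right)^{\alpha-1}\left(\log\frac{s}{\tau}\right)^{\beta-1}\frac{ds}{s}\right)\frac{d\tau}{\tau}.$$

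The heart of the argument is the inner $s$-integral. Here I would pass to the logarithmic variable $u = \log s$, under which $ds/s = du$ and the two kernel factors become $(\log t - u)^{\alpha-1}$ and $(u - \log\tau)^{\beta-1}$; the inner integral becomes
$$\int_{\log\tau}^{\log t}(\log t - u)^{\alpha-1}(u-\log\tau)^{\beta-1}\,du.$$
Rescaling by $v = (u-\log\tau)/\log(t/\tau)$ factors out $(\log(t/\tau))^{\alpha+\beta-1}$ and leaves the Beta integral $\int_0^1 v^{\beta-1}(1-v)^{\alpha-1}\,dv = B(\beta,\alpha) = \Gamma(\alpha)\Gamma(\beta)/\Gamma(\alpha+\beta)$.

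Substituting this value back cancels the prefactor $\Gamma(\alpha)\Gamma(\beta)$ and leaves precisely
$$\frac{1}{\Gamma(\alpha+\beta)}\int_a^t \left(\log\frac{t}{\tau}\right)^{\alpha+\beta-1} f(\tau)\frac{d\tau}{\tau} = \mathfrak{I}_{a+}^{\alpha+\beta}f(t),$$
which is the desired identity for almost every $t \in (a,b)$. I expect the only genuinely delicate point to be the justification of Fubini's theorem, which requires that the iterated integral of the absolute values be finite; this follows from the hypothesis $f \in L^1_{\frac{1}{x}}(a,b)$ together with the local integrability of the logarithmic kernels over the bounded interval $(a,b)$. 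Once Fubini is in place, the logarithmic change of variables and the reduction to the Beta integral are entirely routine.
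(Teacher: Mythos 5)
Your argument is correct: the paper itself gives no proof of this Property, simply citing it to Kilbas--Srivastava--Trujillo, and your Fubini-plus-Beta-function computation in the logarithmic variable is exactly the standard argument found there (it is the Riemann--Liouville semigroup proof of Property \ref{prope1} transported under $x\mapsto\log x$). The Fubini step you flag is indeed the only point needing care, and it is justified as you say: applying Tonelli to $|f|$ reproduces the same Beta computation and shows the iterated integral of absolute values equals $\Gamma(\alpha)\Gamma(\beta)\,\mathfrak{I}_{a+}^{\alpha+\beta}|f|(t)$, which is finite for almost every $t$ when $f\in L^1_{\frac{1}{x}}(a,b)$.
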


\subsection{Poincar\'{e}--Sobolev type inequality}

In this subsection we show the fractional order Poincar\'{e}--Sobolev type inequality.

\begin{theorem}\label{prop1}
Let $a>0$  and $p>1$. Assume that $u\in L^p(a,b)$ and $\mathfrak{D}^{\alpha}_{a+}u\in L_{\frac{1}{x}}^p(a,b)$ with $u(a)=0$. Then for the Hadamard fractional derivative $\mathfrak{D}^{\alpha}_{a+}$ of order $\alpha\in \left(\frac{1}{p},1\right]$  we have
\begin{equation}\label{poinineqhad}
\|u\|_{L^{\infty}(a,b)}\leq \frac{\left|\log\frac{b}{a}\right|^{\alpha-\frac{1}{p}}}{\left(\frac{\alpha p}{p-1}-\frac{1}{p-1}\right)^\frac{p-1}{p}\Gamma(\alpha)} \left\|\mathfrak{D}^{\alpha}_{a+}u\right\|_{L_{\frac{1}{x}}^p(a,b)}.
\end{equation}
\end{theorem}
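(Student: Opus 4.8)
The plan is to mirror the proof of Theorem~\ref{prop1cap} almost verbatim, replacing the Riemann--Liouville kernel $(t-s)^{\alpha-1}$ and measure $ds$ by the Hadamard kernel $\left(\log\frac{t}{s}\right)^{\alpha-1}$ and the scale-invariant measure $\frac{ds}{s}$. First I would invoke Proposition~\ref{prope2had}: since $u(a)=0$, we have $\mathfrak{I}_{a+}^\alpha\mathfrak{D}_{a+}^\alpha u(t)=u(t)-u(a)=u(t)$, so that $u(t)=\mathfrak{I}_{a+}^\alpha\mathfrak{D}_{a+}^\alpha u(t)$ pointwise a.e. on $(a,b]$. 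This is the exact analogue of the starting identity in Theorem~\ref{prop1cap}, and it is the step where the hypotheses $u\in L^p$, $\mathfrak{D}_{a+}^\alpha u\in L^p_{1/x}$ and $u(a)=0$ enter.

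Next I would bound $|u(t)|$ by applying the H\"older inequality to the Hadamard integral. Writing out the kernel,
\begin{equation*}
\left|\mathfrak{I}_{a+}^\alpha\mathfrak{D}_{a+}^\alpha u(t)\right|
\leq \frac{1}{\Gamma(\alpha)}\int_a^t \left(\log\tfrac{t}{s}\right)^{\alpha-1}\left|\mathfrak{D}_{a+}^\alpha u(s)\right|\frac{ds}{s},
\end{equation*}
and the natural way to split the integrand against the measure $\frac{ds}{s}$ is to pair $\left(\log\frac{t}{s}\right)^{\alpha-1}\left(\frac{ds}{s}\right)^{1/q}$ with $\left|\mathfrak{D}_{a+}^\alpha u(s)\right|\left(\frac{ds}{s}\right)^{1/p}$, so that the second factor becomes exactly $\|\mathfrak{D}_{a+}^\alpha u\|_{L^p_{1/x}(a,t)}$. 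The key computation is the first factor, the $L^q$ norm of the kernel in the weighted measure:
\begin{equation*}
\left(\int_a^t \left(\log\tfrac{t}{s}\right)^{(\alpha-1)q}\frac{ds}{s}\right)^{1/q}.
\end{equation*}

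The main technical point is this kernel integral. Substituting $r=\log\frac{t}{s}$ (so $\frac{ds}{s}=-dr$ and $r$ runs from $\log\frac{t}{a}$ down to $0$) turns it into $\int_0^{\log(t/a)} r^{(\alpha-1)q}\,dr$, which converges precisely when $(\alpha-1)q>-1$, i.e. $\alpha q-q+1>0$, i.e. $\alpha>1-\frac{1}{q}=\frac{1}{p}$ --- exactly the hypothesis $\alpha\in\left(\frac{1}{p},1\right]$. The integral evaluates to $\frac{\left(\log\frac{t}{a}\right)^{\alpha q-q+1}}{\alpha q-q+1}$, giving a factor $\frac{\left(\log\frac{t}{a}\right)^{\alpha-1+1/q}}{(\alpha q-q+1)^{1/q}}$. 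This substitution is the only place where the argument genuinely differs from the Caputo case, and I expect it to be the main (though mild) obstacle: one must verify the change of variables and the integrability threshold rather than reusing the power-rule computation directly.

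Finally I would bound $\log\frac{t}{a}\leq\log\frac{b}{a}=\left|\log\frac{b}{a}\right|$ (using $a>0$ and $t\le b$) to pass from the $t$-dependent estimate to a uniform one, rewrite $\alpha-1+\frac{1}{q}=\alpha-\frac{1}{p}$ using $\frac1p+\frac1q=1$, and re-express the constant $(\alpha q-q+1)^{1/q}$ as $\left(\frac{\alpha p}{p-1}-\frac{1}{p-1}\right)^{\frac{p-1}{p}}$ via $q=\frac{p}{p-1}$. Taking the supremum over $t\in(a,b)$ then yields
\begin{equation*}
\|u\|_{L^\infty(a,b)}\leq \frac{\left|\log\frac{b}{a}\right|^{\alpha-\frac{1}{p}}}{\left(\frac{\alpha p}{p-1}-\frac{1}{p-1}\right)^{\frac{p-1}{p}}\Gamma(\alpha)}\left\|\mathfrak{D}_{a+}^\alpha u\right\|_{L^p_{1/x}(a,b)},
\end{equation*}
which is \eqref{poinineqhad}.
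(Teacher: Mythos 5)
Your proposal is correct and follows essentially the same route as the paper: the identity $u=\mathfrak{I}^{\alpha}_{a+}\mathfrak{D}^{\alpha}_{a+}u$ from $u(a)=0$, H\"older's inequality with the split of $\frac{ds}{s}$ into $\left(\frac{ds}{s}\right)^{1/q}\left(\frac{ds}{s}\right)^{1/p}$, the evaluation of the kernel integral under the condition $\alpha q-q+1>0$, and the final bound $\log\frac{t}{a}\leq\log\frac{b}{a}$ with the rewriting of the constant. Your explicit substitution $r=\log\frac{t}{s}$ simply makes transparent the computation the paper states directly.
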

\begin{proof} Let $u\in L_{\frac{1}{x}}^p(a,b),\,u(a)=0$, $\mathfrak{D}^{\alpha}_{a+}u\in L^p(a,b)$ and consider the function
\begin{equation}
u(t)=\mathfrak{I}^{\alpha}_{a+}\mathfrak{D}^{\alpha}_{a+}u(t).
\end{equation}
Using the H\"{o}lder inequality with $\frac{1}{p}+\frac{1}{q}=1$, we obtain
\begin{align*}
\left|\mathfrak{I}^{\alpha}_{a+}\mathfrak{D}^{\alpha}_{a+}u(t)\right|&\leq \frac{1}{\Gamma(\alpha)}\int\limits_a^t\left|\left(\log \frac{t}{s}\right)^{\alpha-1}\mathfrak{D}^{\alpha}_{a+}u(s)\right|\frac{ds}{s^{\frac{1}{p}+\frac{1}{q}}}\\&
\leq \frac{1}{\Gamma(\alpha)}\left(\int\limits_a^t\left|\log \frac{t}{s}\right|^{\alpha q-q} \frac{ds}{s} \right)^{\frac{1}{q}} \left(\int\limits_a^t \left|\mathfrak{D}^{\alpha}_{a+}u(s)\right|^p \frac{ds}{s}\right)^{\frac{1}{p}}\\&
\stackrel{\alpha>\frac{1}{p}}=\frac{\left|\log\frac{t}{a}\right|^{\alpha-1+\frac{1}{q}}}{\left(\alpha q-q+1\right)^\frac{1}{q}\Gamma(\alpha)} \left(\int\limits_a^t \left|\mathfrak{D}^{\alpha}_{a+}u(s)\right|^p \frac{ds}{s}\right)^{\frac{1}{p}}\\&
\leq \frac{\left|\log\frac{b}{a}\right|^{\alpha-1+\frac{1}{q}}}{\left(\alpha q-q+1\right)^\frac{1}{q}\Gamma(\alpha)} \left\|\mathfrak{D}^{\alpha}_{a+}u\right\|_{L_{\frac{1}{x}}^p(a,b)}\\&
=\frac{\left|\log\frac{b}{a}\right|^{\alpha-\frac{1}{p}}}{\left(\alpha q-q+1\right)^\frac{1}{q}\Gamma(\alpha)} \left\|\mathfrak{D}^{\alpha}_{a+}u\right\|_{L_{\frac{1}{x}}^p(a,b)}\\&
=\frac{\left|\log\frac{b}{a}\right|^{\alpha-\frac{1}{p}}}{\left(\frac{\alpha p}{p-1}-\frac{1}{p-1}\right)^\frac{p-1}{p}\Gamma(\alpha)} \left\|\mathfrak{D}^{\alpha}_{a+}u\right\|_{L_{\frac{1}{x}}^p(a,b)},
\end{align*}
where $q=\frac{p}{p-1}>1$, showing \eqref{poinineqhad}.

\end{proof}
\begin{remark}
In Theorem \ref{prop1}, by taking $1<\theta<\infty,$ we have
\begin{equation}
\|u\|_{L^{\theta}(a,b)}\leq \frac{(b-a)^{\frac{1}{\theta}}\left|\log\frac{b}{a}\right|^{\alpha-\frac{1}{p}}}{\left(\frac{\alpha p}{p-1}-\frac{1}{p-1}\right)^\frac{p-1}{p}\Gamma(\alpha)} \left\|\mathfrak{D}^{\alpha}_{a+}u\right\|_{L_{\frac{1}{x}}^p(a,b)}.
\end{equation}
\end{remark}

\subsection{Hardy type inequality}
Here, we show the Hardy inequality for the Hadamard derivative.

\begin{theorem}\label{thmharhad}
Let $a>0$ and $p>1$. Assume that $\mathfrak{D}^{\alpha}_{a+}u\in L_{\frac{1}{x}}^p(a,b)$ and $u(a)=0$. Then for the Hadamard fractional derivative $\mathfrak{D}^{\alpha}_{a+}$ of order $\alpha\in \left(\frac{1}{p},1\right]$ we have
\begin{equation}\label{hardyhad}
\left\|\frac{u}{x}\right\|_{L^p(a,b)}\leq \frac{a^{-1}(b-a)^{\frac{1}{p}}\left|\log\frac{b}{a}\right|^{\alpha-\frac{1}{p}}}{\left(\frac{\alpha p}{p-1}-\frac{1}{p-1}\right)^\frac{p-1}{p}\Gamma(\alpha)} \left\|\mathfrak{D}^{\alpha}_{a+}u\right\|_{L^{p}_{\frac{1}{x}}(a,b)}.
\end{equation}

\end{theorem}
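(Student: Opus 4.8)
The plan is to mirror the argument used for the Caputo Hardy inequality in Theorem \ref{thmharcap}, replacing the Caputo Poincar\'e--Sobolev bound by its Hadamard analogue from Theorem \ref{prop1}. First I would exploit the fact that on the interval $(a,b)$ with $a>0$ the weight $x^{-1}$ attains its largest value at the left endpoint: from $a<x<b$ one has $\frac{1}{x}<\frac{1}{a}$, hence $x^{-p}<a^{-p}$. This lets me strip the weight off the left-hand side,
\begin{equation*}
\left\|\frac{u}{x}\right\|_{L^p(a,b)}=\left(\int_a^b x^{-p}|u(x)|^p\,dx\right)^{1/p}\leq a^{-1}\left(\int_a^b |u(x)|^p\,dx\right)^{1/p}=a^{-1}\|u\|_{L^p(a,b)},
\end{equation*}
reducing the problem to an $L^p$ bound for $u$ itself.

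Next I would control $\|u\|_{L^p(a,b)}$ by the Hadamard Poincar\'e--Sobolev inequality \eqref{poinineqhad}. The remaining hypotheses ($a>0$, $p>1$, $u(a)=0$, $\alpha\in(\frac1p,1]$, $\mathfrak{D}^\alpha_{a+}u\in L^p_{1/x}(a,b)$) are precisely those of Theorem \ref{prop1}, so that estimate applies; note that the membership $u\in L^p(a,b)$ needed there is in fact produced a posteriori by the resulting uniform bound. Passing from the uniform norm to $L^p$ on a finite interval costs only a volume factor,
\begin{equation*}
\|u\|_{L^p(a,b)}\leq (b-a)^{1/p}\|u\|_{L^\infty(a,b)},
\end{equation*}
and then \eqref{poinineqhad} gives
\begin{equation*}
\|u\|_{L^p(a,b)}\leq \frac{(b-a)^{1/p}\left|\log\frac{b}{a}\right|^{\alpha-\frac1p}}{\left(\frac{\alpha p}{p-1}-\frac{1}{p-1}\right)^{\frac{p-1}{p}}\Gamma(\alpha)}\left\|\mathfrak{D}^\alpha_{a+}u\right\|_{L^p_{1/x}(a,b)}.
\end{equation*}
Equivalently, one may invoke the $L^\theta$-form of Theorem \ref{prop1} recorded in the following Remark with $\theta=p$, which yields the same right-hand side in a single step.

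Finally I would chain the two displays: substituting the $L^p$ estimate into the weight-stripping bound produces
\begin{equation*}
\left\|\frac{u}{x}\right\|_{L^p(a,b)}\leq \frac{a^{-1}(b-a)^{1/p}\left|\log\frac{b}{a}\right|^{\alpha-\frac1p}}{\left(\frac{\alpha p}{p-1}-\frac{1}{p-1}\right)^{\frac{p-1}{p}}\Gamma(\alpha)}\left\|\mathfrak{D}^\alpha_{a+}u\right\|_{L^p_{1/x}(a,b)},
\end{equation*}
which is exactly \eqref{hardyhad}. I do not anticipate any genuine obstacle: the entire analytic content is already packaged in Theorem \ref{prop1}, and the features specific to the Hadamard setting---the logarithmic kernel $\log(t/s)$ and the weighted space $L^p_{1/x}$---are confined to that theorem and never resurface here. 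The only point requiring a little care is the bookkeeping of the $a$-dependence: the factor $a^{-1}$ comes solely from bounding $x^{-1}$ at the left endpoint (which is why $a>0$ is essential, so this value is finite), whereas the extra $(b-a)^{1/p}$, absent from the $L^\infty$ statement, is exactly the volume factor incurred in descending from $L^\infty$ to $L^p$.
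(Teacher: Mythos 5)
Your argument is correct and coincides with the paper's own proof: both strip the weight via $x^{-p}<a^{-p}$ to reduce to $a^{-1}\|u\|_{L^p(a,b)}$ and then invoke the Hadamard Poincar\'e--Sobolev bound \eqref{poinineqhad} (through the $L^\infty$-to-$L^p$ volume factor $(b-a)^{1/p}$) to reach \eqref{hardyhad}. Your explicit remark on where the factors $a^{-1}$ and $(b-a)^{1/p}$ originate is a helpful clarification of a step the paper leaves implicit, but it is not a different method.
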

\begin{proof}
From $a< x< b$ we have $\frac{1}{b}<\frac{1}{x}<\frac{1}{a}.$ By using Theorem \ref{prop1}, we calculate
\begin{equation}\label{hhhhad}
\begin{split}
\left(\int_{a}^{b}\frac{|u(x)|^{p}}{x^{p}}dx\right)^{\frac{1}{p}}&=\left(\int_{a}^{b}x^{-p}|u(x)|^{p}dx\right)^{\frac{1}{p}}\\&
\leq a^{-1}\|u\|_{L^{p}(a,b)}\\&
\stackrel{\eqref{poinineqhad}}\leq\frac{a^{-1}(b-a)^{\frac{1}{p}}\left|\log\frac{b}{a}\right|^{\alpha-\frac{1}{p}}}{\left(\frac{\alpha p}{p-1}-\frac{1}{p-1}\right)^\frac{p-1}{p}\Gamma(\alpha)} \left\|\mathfrak{D}^{\alpha}_{a+}u\right\|_{L^{p}_{\frac{1}{x}}(a,b)},
\end{split}
\end{equation}
showing \eqref{hardyhad}.
\end{proof}

Let us show the weighted Hardy inequality with the Hadamard derivative.
\begin{theorem}\label{weiHarhad}
Let $a>0,\,\,u(a)=0$ and $\mathfrak{D}^{\alpha}_{a+}u\in L_{\frac{1}{x}}^p(a,b)$ with $p>1$. Then for the Hadamard fractional derivative $\mathfrak{D}^{\alpha}_{a+}$ of order $\alpha\in \left(\frac{1}{p},1\right]$ and $\gamma\in\mathbb{R}$,
 we have inequality
\begin{equation}\label{weightharhad}
\left\|\frac{u}{x^{\gamma+1}}\right\|_{L^p(a,b)}\leq C \left\|\frac{\mathfrak{D}^{\alpha}_{a+}u}{x^{\gamma}}\right\|_{L_{\frac{1}{x}}^p(a,b)}.
\end{equation}
\end{theorem}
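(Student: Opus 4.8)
The plan is to mirror exactly the argument used for the Caputo weighted Hardy inequality in Theorem~\ref{weiHar}, replacing its two ingredients by their Hadamard counterparts: the Poincar\'e--Sobolev bound of Theorem~\ref{prop1} and the Hadamard Hardy inequality of Theorem~\ref{thmharhad}. The only structural difference is that the derivative now lives in the weighted space $L^p_{1/x}(a,b)$, so throughout I must carry the extra factor $dx/x$ and decide, according to the sign of $\gamma$, which endpoint of the interval gives the correct comparison between $x^{\gamma}$ and a constant. Since $a>0$ the interval $[a,b]$ is bounded away from the origin, so every such weight stays comparable to a positive constant and the resulting $C$ is finite. I would split the proof into the two cases $\gamma\ge 0$ and $\gamma<0$.

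For $\gamma\ge 0$, I would first pull the weight out of the left-hand side using $x^{-(\gamma+1)}\le a^{-(\gamma+1)}$ on $(a,b)$, giving $\|u/x^{\gamma+1}\|_{L^p(a,b)}\le a^{-\gamma-1}\|u\|_{L^p(a,b)}$. Next I would apply the $L^p$ form of Theorem~\ref{prop1} (its remark with $\theta=p$) to bound $\|u\|_{L^p(a,b)}$ by $\|\mathfrak{D}^\alpha_{a+}u\|_{L^p_{1/x}(a,b)}$ up to the explicit constant. Finally I would reinsert $x^{\gamma p}/x^{\gamma p}$ inside the weighted derivative integral and use $x^{\gamma p}\le b^{\gamma p}$ (valid precisely because $\gamma\ge 0$) to replace $\|\mathfrak{D}^\alpha_{a+}u\|_{L^p_{1/x}(a,b)}$ by $b^{\gamma}\|x^{-\gamma}\mathfrak{D}^\alpha_{a+}u\|_{L^p_{1/x}(a,b)}$. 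Collecting the factors yields \eqref{weightharhad} with constant proportional to $a^{-\gamma-1}b^{\gamma}$.

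For $\gamma<0$, I would write $x^{-(\gamma+1)p}=x^{-\gamma p}x^{-p}$ and bound $x^{-\gamma p}\le b^{-\gamma p}$ (now $-\gamma>0$), so that $\|u/x^{\gamma+1}\|_{L^p(a,b)}\le b^{-\gamma}\|u/x\|_{L^p(a,b)}$. Then I would apply the Hadamard Hardy inequality \eqref{hardyhad} of Theorem~\ref{thmharhad} to the factor $\|u/x\|_{L^p(a,b)}$. As in the first case I would reinsert $x^{\gamma p}/x^{\gamma p}$ in the derivative integral, but this time use $x^{\gamma p}\le a^{\gamma p}$ (valid because $\gamma<0$ and $x\ge a$) to pass to the weighted norm, producing a factor $a^{\gamma}$. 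This gives \eqref{weightharhad} with constant proportional to $a^{\gamma-1}b^{-\gamma}$.

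Both cases combine since $a^{-\gamma-1}b^{\gamma}=a^{-|\gamma|-1}b^{|\gamma|}$ for $\gamma\ge0$ and $a^{\gamma-1}b^{-\gamma}=a^{-|\gamma|-1}b^{|\gamma|}$ for $\gamma<0$, so one may take $C=\dfrac{a^{-|\gamma|-1}b^{|\gamma|}(b-a)^{1/p}\left|\log\frac{b}{a}\right|^{\alpha-1/p}}{\left(\frac{\alpha p}{p-1}-\frac{1}{p-1}\right)^{(p-1)/p}\Gamma(\alpha)}$. There is no genuine analytic obstacle here; the only point requiring care is the sign-dependent choice of endpoint when estimating the powers of $x$ both on the left-hand side (to remove the weight $x^{-(\gamma+1)}$) and inside the weighted derivative norm (to reintroduce $x^{-\gamma}$), since choosing the wrong endpoint would reverse the inequality. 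The compactness of $[a,b]$ away from $0$ is what guarantees that the constant remains finite in every case.
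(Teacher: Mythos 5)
Your proposal is correct and follows essentially the same route as the paper: the same split into $\gamma\ge 0$ and $\gamma<0$, the same endpoint comparisons for the powers of $x$, the same use of the Poincar\'e--Sobolev bound \eqref{poinineqhad} in the first case and of the Hadamard Hardy inequality \eqref{hardyhad} in the second, and the same resulting constants $a^{-\gamma-1}b^{\gamma}$ and $a^{\gamma-1}b^{-\gamma}$. The unified constant $a^{-|\gamma|-1}b^{|\gamma|}$ you record at the end is a small tidying touch the paper leaves implicit.
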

\begin{proof}
We prove this result in two steps. Let us first show the case $\gamma\geq0$.  Since $a>0$ we have $b^{-\gamma-1}<x^{-\gamma-1}<a^{-\gamma-1}$, and by the direct calculations one obtains
\begin{equation}
 \begin{split}
\left(\int_{a}^{b}\frac{|u(x)|^{p}}{x^{(\gamma+1)p}}dx\right)^{\frac{1}{p}}&\leq a^{-\gamma-1}\left(\int_{a}^{b}|u(x)|^{p}dx\right)^{\frac{1}{p}}\\&
\stackrel{\eqref{poinineqhad}}\leq \frac{a^{-\gamma-1}(b-a)^{\frac{1}{p}}\left|\log\frac{b}{a}\right|^{\alpha-\frac{1}{p}}}{\left(\frac{\alpha p}{p-1}-\frac{1}{p-1}\right)^\frac{p-1}{p}\Gamma(\alpha)} \left(\int_{a}^{b}|\mathfrak{D}^{\alpha}_{a+}u|^{p}\frac{dx}{x}\right)^{\frac{1}{p}}\\&
=\frac{a^{-\gamma-1}(b-a)^{\frac{1}{p}}\left|\log\frac{b}{a}\right|^{\alpha-\frac{1}{p}}}{\left(\frac{\alpha p}{p-1}-\frac{1}{p-1}\right)^\frac{p-1}{p}\Gamma(\alpha)} \left(\int_{a}^{b}\frac{x^{\gamma p}}{x^{\gamma p}}|\mathfrak{D}_{a+}^{\alpha}u|^{p}\frac{dx}{x}\right)^{\frac{1}{p}}\\&
\leq\frac{a^{-\gamma-1}(b-a)^{\frac{1}{p}}b^{\gamma }\left|\log\frac{b}{a}\right|^{\alpha-\frac{1}{p}}}{\left(\frac{\alpha p}{p-1}-\frac{1}{p-1}\right)^\frac{p-1}{p}\Gamma(\alpha)} \left(\int_{a}^{b}\frac{|\mathfrak{D}^{\alpha}_{a+}u|^{p}}{x^{\gamma p}}\frac{dx}{x}\right)^{\frac{1}{p}}\\&
=\frac{a^{-\gamma-1}(b-a)^{\frac{1}{p}}b^{\gamma }\left|\log\frac{b}{a}\right|^{\alpha-\frac{1}{p}}}{\left(\frac{\alpha p}{p-1}-\frac{1}{p-1}\right)^\frac{p-1}{p}\Gamma(\alpha)} \left\|\frac{\mathfrak{D}_{a+}^{\alpha}u}{x^{\gamma}}\right\|_{L_{\frac{1}{x}}^p(a,b)}.
\end{split}
\end{equation}

Now to prove the case $\gamma<0$ we arrive at
\begin{equation}
 \begin{split}
\left(\int_{a}^{b}\frac{|u(x)|^{p}}{x^{(\gamma+1)p}}dx\right)^{\frac{1}{p}}&=\left(\int_{a}^{b}\frac{|u(x)|^{p}}{x^{(\gamma p+p)}}dx\right)^{\frac{1}{p}}
\\&\leq b^{-\gamma}\left(\int_{a}^{b}\frac{|u(x)|^{p}}{x^{p}}dx\right)^{\frac{1}{p}}\\&
\stackrel{\eqref{hardyhad}}\leq \frac{b^{-\gamma}a^{-1}(b-a)^{\frac{1}{p}}\left|\log\frac{b}{a}\right|^{\alpha-\frac{1}{p}}}{\left(\frac{\alpha p}{p-1}-\frac{1}{p-1}\right)^\frac{p-1}{p}\Gamma(\alpha)} \left\|\mathfrak{D}^{\alpha}_{a+}u\right\|_{L^{p}_{\frac{1}{x}}(a,b)}\\&
= \frac{b^{-\gamma}a^{-1}(b-a)^{\frac{1}{p}}\left|\log\frac{b}{a}\right|^{\alpha-\frac{1}{p}}}{\left(\frac{\alpha p}{p-1}-\frac{1}{p-1}\right)^\frac{p-1}{p}\Gamma(\alpha)} \left(\int_{a}^{b}|\mathfrak{D}^{\alpha}_{a+}u|^{p}\frac{dx}{x}\right)^{\frac{1}{p}}\\&
=\frac{b^{-\gamma}a^{-1}(b-a)^{\frac{1}{p}}\left|\log\frac{b}{a}\right|^{\alpha-\frac{1}{p}}}{\left(\frac{\alpha p}{p-1}-\frac{1}{p-1}\right)^\frac{p-1}{p}\Gamma(\alpha)} \left(\int_{a}^{b}\frac{x^{\gamma p}}{x^{\gamma p}}|\mathfrak{D}^{\alpha}_{a+}u|^{p}\frac{dx}{x}\right)^{\frac{1}{p}}\\&
\leq \frac{b^{-\gamma}a^{\gamma-1}(b-a)^{\frac{1}{p}}\left|\log\frac{b}{a}\right|^{\alpha-\frac{1}{p}}}{\left(\frac{\alpha p}{p-1}-\frac{1}{p-1}\right)^\frac{p-1}{p}\Gamma(\alpha)} \left(\int_{a}^{b}\frac{|\mathfrak{D}^{\alpha}_{a+}u|^{p}}{x^{\gamma p}}\frac{dx}{x}\right)^{\frac{1}{p}}\\&
=\frac{b^{-\gamma}a^{\gamma-1}(b-a)^{\frac{1}{p}}\left|\log\frac{b}{a}\right|^{\alpha-\frac{1}{p}}}{\left(\frac{\alpha p}{p-1}-\frac{1}{p-1}\right)^\frac{p-1}{p}\Gamma(\alpha)} \left\|\frac{\mathfrak{D}^{\alpha}_{a+}u}{x^{\gamma}}\right\|_{L_{\frac{1}{x}}^{p}(a,b)},
\end{split}
\end{equation}
showing \eqref{weightharhad}.
\end{proof}

\subsection{Fractional Gagliardo-Nirenberg type inequality with  Hadamard derivative}
\begin{theorem}\label{LM: GN-Ineq-yhad}
Assume that $\alpha\in\left(\frac{1}{q},1\right]$, $1\leq p, q<\infty$.
Then we have the following Gagliardo-Nirenberg type inequality
\begin{equation}
\label{EQ: GN-inequality}
\|u\|_{L^{\gamma}(a, b)}\leq  C\|\mathfrak{D}^{\alpha}_{a+} u\|_{L_{\frac{1}{x}}^{q}(a, b)}^{s} \|u\|_{L^{p}(a, b)}^{1-s},
\end{equation}
with
\begin{equation}
\frac{\gamma s}{q}+\frac{\gamma(1-s)}{p}=1,
\end{equation}
where $s\in[0,1]$.
\end{theorem}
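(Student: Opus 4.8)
The plan is to transcribe the proof of Theorem~\ref{LM: GN-Ineq-y} almost verbatim, with the Caputo Poincar\'e--Sobolev estimate \eqref{poinineq} replaced by its Hadamard analogue \eqref{poinineqhad} from Theorem~\ref{prop1}. The two endpoints are immediate: the constraint $\frac{\gamma s}{q}+\frac{\gamma(1-s)}{p}=1$ forces $\gamma=p$ when $s=0$, so \eqref{EQ: GN-inequality} collapses to the trivial $\|u\|_{L^p(a,b)}\le\|u\|_{L^p(a,b)}$, and it forces $\gamma=q$ when $s=1$, so that \eqref{EQ: GN-inequality} is exactly the $L^q$ bound furnished by Theorem~\ref{prop1}. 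Hence it suffices to treat $s\in(0,1)$.

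For $s\in(0,1)$ I would first write $|u(x)|^{\gamma}=|u(x)|^{\gamma s}\,|u(x)|^{\gamma(1-s)}$ and apply H\"older's inequality with the conjugate exponents $\frac{q}{\gamma s}$ and $\frac{p}{\gamma(1-s)}$, whose reciprocals sum to $1$ precisely by the hypothesis $\frac{\gamma s}{q}+\frac{\gamma(1-s)}{p}=1$. This gives
\begin{equation*}
\int_a^b|u(x)|^{\gamma}\,dx\le\left(\int_a^b|u(x)|^{q}\,dx\right)^{\frac{\gamma s}{q}}\left(\int_a^b|u(x)|^{p}\,dx\right)^{\frac{\gamma(1-s)}{p}}.
\end{equation*}
The second factor is already $\|u\|_{L^p(a,b)}^{\gamma(1-s)}$. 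For the first factor I would pass through the $L^\infty$ norm, using $\left(\int_a^b|u|^{q}\,dx\right)^{1/q}\le (b-a)^{1/q}\|u\|_{L^\infty(a,b)}$ and then Theorem~\ref{prop1} with $q$ in place of $p$; this application is legitimate exactly because $\alpha\in\left(\frac{1}{q},1\right]$, and it yields $\|u\|_{L^\infty(a,b)}\le C\|\mathfrak{D}^{\alpha}_{a+}u\|_{L^{q}_{\frac{1}{x}}(a,b)}$. Substituting, absorbing the powers of $(b-a)$ and the constant from \eqref{poinineqhad} into a single constant $C$, and taking the $1/\gamma$-th power delivers \eqref{EQ: GN-inequality}.

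There is no genuine obstacle here; the argument is a routine adaptation of the Caputo case, and the only points requiring care are bookkeeping ones. One must check that the H\"older exponents are conjugate (guaranteed by the stated relation among $\gamma,s,p,q$), that the degenerate endpoints $s\in\{0,1\}$ are handled separately as above, and --- the single substantive constraint --- that the Poincar\'e--Sobolev inequality of Theorem~\ref{prop1} may be invoked at integrability exponent $q$, which is exactly why the hypothesis $\alpha\in\left(\frac{1}{q},1\right]$ (rather than $\alpha\in\left(\frac{1}{p},1\right]$) appears in the statement. The standing assumption $u(a)=0$ of Theorem~\ref{prop1} is of course inherited throughout.
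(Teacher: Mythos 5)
Your proposal is correct and follows essentially the same route as the paper: the same splitting $|u|^{\gamma}=|u|^{\gamma s}|u|^{\gamma(1-s)}$, H\"older with exponents $\frac{q}{\gamma s}$ and $\frac{p}{\gamma(1-s)}$, and then the Hadamard Poincar\'e--Sobolev inequality \eqref{poinineqhad} at exponent $q$ to control the first factor. Your explicit treatment of the endpoints $s=0,1$ and the passage through the $L^{\infty}$ norm are just slightly more careful bookkeeping than the paper's one-line citation of \eqref{poinineqhad}.
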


\begin{proof} 
By using the H\"{o}lder inequality with $\frac{\gamma s}{q}+\frac{\gamma(1-s)}{p}=1,$  we get
\begin{equation}
\begin{split}
    \int_{a}^{b}|u(x)|^{\gamma}dx&=\int_{a}^{b}|u(x)|^{\gamma s}|u|^{\gamma(1-s)}dx\\&
    \leq\left(\int_{a}^{b}|u(x)|^{q}dx\right)^{\frac{\gamma s}{q}} \left(\int_{a}^{b}|u(x)|^{p}dx\right)^{\frac{\gamma(1-s)}{p}}\\&
    \stackrel{\eqref{poinineqhad}}\leq C\|\mathfrak{D}^{\alpha}_{a+} u\|^{\gamma s}_{L_{\frac{1}{x}}^{q}(a, b)} \|u\|_{L^{p}(a, b)}^{\gamma (1-s)},
\end{split}
\end{equation}
completing the proof.
\end{proof}

\subsection{Fractional Caffarelli-Kohn-Nirenberg type inequality with Hadamard derivative}

Now we are in a position to show the fractional Cafarrelli-Kohn-Nirenberg type inequality.
\begin{theorem}
\label{CKN}
Let $a>0$, $1<p,q<\infty$, $\alpha\in\left(1-\frac{1}{q},1\right)$, and $0<r<\infty$ such that $p+q\geq r$. Suppose that $\delta\in [0,1]\cap[\frac{r-q}{r},\frac{p}{r}]$ and $c,d,e\in \mathbb{R}$ with $\frac{\delta}{p}+\frac{1-\delta}{q}=\frac{1}{r}$ and
$c=\delta(d-1)+e(1-\delta)$. Assume that  $x^{d}\mathfrak{D}^{\alpha}_{a+}u\in L_{\frac{1}{x}}^{p}(a,b)$, $x^{e}u\in L^{q}(a,b)$ and $u(a)=0$.

Moreover, let $1+(d-1)p>0$ then we have $x^{c}u\in L^{r}(a,b)$ and
\begin{equation}\label{CKNpqrhad}
\|x^{c}u\|_{L^{r}(a,b)}\leq C\left\|x^{d}\mathfrak{D}^{\alpha}_{a+}u\right\|_{L_{\frac{1}{x}}^{p}(a,b)}^{\delta}\|x^{e}u\|^{1-\delta}_{L^{q}(a,b)}.
\end{equation}
\end{theorem}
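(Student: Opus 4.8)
The plan is to mirror the proof of the Caputo Caffarelli--Kohn--Nirenberg inequality given earlier, replacing the Caputo weighted Hardy inequality by its Hadamard counterpart (Theorem \ref{weiHarhad}). As there, I would first dispose of the two endpoint values of $\delta$ and then treat the genuinely interpolated range by a single application of H\"older's inequality followed by the weighted Hardy estimate. The constraint $\frac{\delta}{p}+\frac{1-\delta}{q}=\frac{1}{r}$ together with $c=\delta(d-1)+e(1-\delta)$ is what makes all the exponents fit.

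First I would handle $\delta=0$: the constraint forces $q=r$, while $c=\delta(d-1)+e(1-\delta)=e$, so \eqref{CKNpqrhad} reduces to the trivial inequality $\|x^{e}u\|_{L^{r}(a,b)}\leq \|x^{e}u\|_{L^{q}(a,b)}$. For $\delta=1$ the same constraint gives $p=r$ and $c=d-1$; here I would apply Theorem \ref{weiHarhad} with weight parameter $\gamma=-d$, which turns $\left\|\frac{u}{x^{\gamma+1}}\right\|_{L^{p}(a,b)}\leq C\left\|\frac{\mathfrak{D}^{\alpha}_{a+}u}{x^{\gamma}}\right\|_{L_{\frac{1}{x}}^{p}(a,b)}$ into exactly $\|x^{d-1}u\|_{L^{p}(a,b)}\leq C\|x^{d}\mathfrak{D}^{\alpha}_{a+}u\|_{L_{\frac{1}{x}}^{p}(a,b)}$, the assumption $1+(d-1)p>0$ guaranteeing admissibility of this weight and finiteness.

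For the interior range I would split the integrand using $c=\delta(d-1)+e(1-\delta)$, writing $x^{cr}|u|^{r}=\frac{|u|^{\delta r}}{x^{\delta r(1-d)}}\cdot\frac{|u|^{(1-\delta)r}}{x^{-er(1-\delta)}}$, and then apply H\"older's inequality with the conjugate exponents $\frac{p}{\delta r}$ and $\frac{q}{(1-\delta)r}$; their conjugacy is precisely the content of $\frac{\delta}{p}+\frac{1-\delta}{q}=\frac{1}{r}$, and the membership $\delta\in[\frac{r-q}{r},\frac{p}{r}]$ ensures both exponents are $\geq 1$. This yields $\|x^{c}u\|_{L^{r}(a,b)}\leq\left\|\frac{u}{x^{1-d}}\right\|^{\delta}_{L^{p}(a,b)}\left\|\frac{u}{x^{-e}}\right\|^{1-\delta}_{L^{q}(a,b)}$. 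Finally I would bound the first factor by the weighted Hardy inequality (Theorem \ref{weiHarhad}, again with $\gamma=-d$) and rewrite the second factor as $\|x^{e}u\|_{L^{q}(a,b)}$, arriving at \eqref{CKNpqrhad}.

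The argument is essentially mechanical once the bookkeeping is fixed, so the only real point requiring care is the parameter matching: one must check that the single choice $\gamma=-d$ simultaneously fits the form of Theorem \ref{weiHarhad} and that the admissible range of $\alpha$ stated in the hypothesis indeed places us within the range $\alpha\in(\frac{1}{p},1]$ demanded by that Hardy inequality for the exponent $p$. Verifying that the two H\"older exponents are genuinely conjugate and both lie in $[1,\infty)$, and confirming $x^{c}u\in L^{r}(a,b)$ from the finiteness of the right-hand side, is where I would focus the attention; no analytic difficulty beyond this bookkeeping is expected.
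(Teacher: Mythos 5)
Your proposal follows essentially the same route as the paper's own proof: the same three-case split on $\delta$, the same H\"older decomposition of $x^{cr}|u|^{r}$ for the interior range, and the same reduction of the weighted factor via Theorem \ref{weiHarhad} with $\gamma=-d$. The parameter-matching concern you raise about $\alpha\in\left(1-\frac{1}{q},1\right)$ versus the range $\alpha\in\left(\frac{1}{p},1\right]$ required by the Hardy inequality is a legitimate point that the paper itself does not address, but your argument is otherwise identical to the published one.
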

\begin{proof}
Case $\delta=0$.

If $\delta=0,$ then $c=e$ and $q=r$. Then  \eqref{CKNpqr} is the inequality
$$\|x^{c}u\|_{L^{r}(a,b)}\leq \|x^{c}u\|_{L^{r}(a,b)}.$$

Case $\delta=1$.

If $\delta=1$, then we have $c=d-1$ and $p=r$. Also, we have $1+cp=1+(d-1)p>0$. Then by using weighted fractional Hardy inequality (Theorem \ref{weiHarhad}) we obtain
\begin{equation}
\begin{split}
 \left\|x^{c}u\right\|_{L^{p}(a,b)}&\leq C\left\|x^{c+1}\mathfrak{D}^{\alpha}_{a+}u\right\|_{L_{\frac{1}{x}}^{p}(a,b)}
 \\&= C\left\|x^{d}\mathfrak{D}^{\alpha}_{a+}u\right\|_{L_{\frac{1}{x}}^{p}(a,b)}.
\end{split}
\end{equation}

Case $\delta\in [0,1]\cap[\frac{r-q}{r},\frac{p}{r}]$.

By assuming $c=\delta(d-1)+e(1-\delta)$ and using the H\"{o}lder's inequality with $\frac{\delta}{p}+\frac{1-\delta}{q}=\frac{1}{r}$, we calculate
\begin{equation}
\begin{split}
\|x^{c}u\|_{L^{r}(a,b)}&=\left(\int_{a}^{b}x^{cr}|u(x)|^{r}dx\right)^{\frac{1}{r}}
\\&=\left(\int_{a}^{b}\frac{|u(x)|^{\delta r}}{x^{\delta r(1-d)}}\frac{|u(x)|^{(1-\delta) r}}{x^{-er(1-\delta)}}dx\right)^{\frac{1}{r}}
\\&\leq\left\|\frac{u}{x^{1-d}}\right\|^{\delta}_{L^{p}(a,b)}\left\|\frac{u}{x^{-e}}\right\|^{1-\delta}_{L^{q}(a,b)}.
\end{split}
\end{equation}
By using the weighted fractional Hardy inequality (Theorem \ref{weiHarhad}) with $1+(d-1)p>0$, we obtain
\begin{equation}
\begin{split}
\|x^{c}u\|_{L^{r}(a,b)}&\leq \left\|\frac{u}{x^{1-d}}\right\|^{\delta}_{L^{p}(a,b)}\left\|\frac{u}{x^{-e}}\right\|^{1-\delta}_{L^{q}(a,b)}
\\&\leq C\|x^{d}\mathfrak{D}^{\alpha}_{a+}u\|^{\delta}_{L_{\frac{1}{x}}^{p}(a,b)}\|x^{e}u\|^{1-\delta}_{L^{q}(a,b)},
\end{split}
\end{equation}
showing \eqref{CKNpqrhad}.
\end{proof}

\section{Applications}
In this Section we show some applications of the obtained inequalities for the real-valued functions $u$.

\subsection{Uncertainly principle}
The inequality \eqref{hardycap} implies the following uncertainly principle:
\begin{cor}\label{uncercap}
Let $a>0$, $u(a)=0$ and $\partial^{\alpha}_{a+}u\in L^p(a,b)$ with $p>1$.  Then for the Caputo fractional derivative $\partial^{\alpha}_{a+}$ of order $\alpha\in \left(\frac{1}{p},1\right]$ we have following inequality
\begin{equation}\label{up}
\left\|u\right\|^{2}_{L^2(a,b)}\leq \frac{a^{-1}(b-a)^{\alpha}}{\left(\frac{\alpha p}{p-1}-\frac{1}{p-1}\right)^\frac{p-1}{p}\Gamma(\alpha)} \left\|\partial^{\alpha}_{a+}u\right\|_{L^p(a,b)}\|xu\|_{L^q(a,b)},
\end{equation}
where $q=\frac{p}{p-1}$.
\end{cor}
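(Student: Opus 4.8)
The plan is to derive the uncertainty principle \eqref{up} directly from the fractional Hardy inequality \eqref{hardycap} of Theorem \ref{thmharcap}, combined with a single application of the Cauchy--Schwarz (or H\"older) inequality. The target inequality bounds $\|u\|_{L^2(a,b)}^2$ by a product of $\|\partial^\alpha_{a+}u\|_{L^p(a,b)}$ and $\|xu\|_{L^q(a,b)}$, so the natural strategy is to split the square of the $L^2$-norm of $u$ into a factor carrying a $\frac{1}{x}$ weight and a factor carrying an $x$ weight, which exactly matches the two pieces we want to produce.

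\medskip

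First I would write
\begin{equation*}
\|u\|_{L^2(a,b)}^2=\int_a^b |u(x)|^2\,dx=\int_a^b \frac{|u(x)|}{x}\,\cdot\,x|u(x)|\,dx,
\end{equation*}
so that the integrand factors as the product of $\frac{|u|}{x}$ and $x|u|$. Applying H\"older's inequality with the conjugate exponents $p$ and $q=\frac{p}{p-1}$ then gives
\begin{equation*}
\int_a^b \frac{|u(x)|}{x}\,\cdot\,x|u(x)|\,dx\leq \left\|\frac{u}{x}\right\|_{L^p(a,b)}\,\|xu\|_{L^q(a,b)}.
\end{equation*}
At this point the second factor is already in the desired form, and the first factor is precisely the left-hand side of the Hardy inequality \eqref{hardycap}.

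\medskip

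The final step is to invoke Theorem \ref{thmharcap}: under the stated hypotheses ($a>0$, $u(a)=0$, $\partial^\alpha_{a+}u\in L^p(a,b)$, $p>1$, $\alpha\in(\frac1p,1]$), inequality \eqref{hardycap} bounds $\left\|\frac{u}{x}\right\|_{L^p(a,b)}$ by the constant $\frac{a^{-1}(b-a)^\alpha}{\left(\frac{\alpha p}{p-1}-\frac{1}{p-1}\right)^{\frac{p-1}{p}}\Gamma(\alpha)}$ times $\|\partial^\alpha_{a+}u\|_{L^p(a,b)}$. Substituting this into the product above yields exactly \eqref{up} with the same constant, which completes the argument.

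\medskip

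I do not anticipate any genuine obstacle here: the statement is a corollary, and every hypothesis needed for Theorem \ref{thmharcap} is assumed verbatim in the corollary. The only point requiring a moment of care is the bookkeeping of the exponents $p$ and $q$ in the H\"older step, ensuring that the weight $\frac{1}{x}$ lands on the factor estimated by Hardy's inequality and the weight $x$ lands on the factor that remains as $\|xu\|_{L^q(a,b)}$; this is immediate once the splitting $|u|^2=\frac{|u|}{x}\cdot x|u|$ is written down.
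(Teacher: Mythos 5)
Your proposal is correct and follows essentially the same route as the paper: both arguments rest on the splitting $|u|^2=\frac{|u|}{x}\cdot x|u|$, H\"older's inequality with exponents $p$ and $q=\frac{p}{p-1}$, and the Hardy inequality \eqref{hardycap} applied to the factor $\left\|\frac{u}{x}\right\|_{L^p(a,b)}$. The only difference is presentational: the paper writes the chain of inequalities starting from the right-hand side of \eqref{up} and working down to $\|u\|_{L^2(a,b)}^2$, while you proceed in the forward direction.
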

\begin{proof}
By using \eqref{hardycap}, we obtain
\begin{equation}
    \begin{split}
        \frac{a^{-1}(b-a)^{\alpha}}{\left(\frac{\alpha p}{p-1}-\frac{1}{p-1}\right)^\frac{p-1}{p}\Gamma(\alpha)} \left\|\partial^{\alpha}_{a+}u\right\|_{L^p(a,b)}\|xu\|_{L^q(a,b)}&\stackrel{\eqref{hardycap}}\geq\left\|\frac{u}{x}\right\|_{L^p(a,b)}\|xu\|_{L^q(a,b)}\\&
        \geq \left\|u\right\|^{2}_{L^2(a,b)},
    \end{split}
\end{equation}
completing the proof.
\end{proof}
\begin{remark}
Also, the uncertainly principle holds for the Riemann-Liouville derivative.
\end{remark}
Let us show uncertainly principle for the Hadamard derivative.
\begin{cor}
Let $a>0$ and $p>1$. Assume that $\mathfrak{D}^{\alpha}_{a+}u\in L_{\frac{1}{x}}^p(a,b)$ and $u(a)=0$. Then for the Hadamard fractional derivative $\mathfrak{D}^{\alpha}_{a+}$ of order $\alpha\in \left(\frac{1}{p},1\right]$ we have
\begin{equation}
\|u\|^{2}_{L^{2}(a,b)}\leq  \frac{a^{-1}(b-a)^{\frac{1}{p}}\left|\log\frac{b}{a}\right|^{\alpha-\frac{1}{p}}}{\left(\frac{\alpha p}{p-1}-\frac{1}{p-1}\right)^\frac{p-1}{p}\Gamma(\alpha)} \left\|\mathfrak{D}^{\alpha}_{a+}u\right\|_{L^{p}_{\frac{1}{x}}(a,b)}\|xu\|_{L^{q}(a,b)},
\end{equation}
where $q=\frac{p}{p-1}$.
\end{cor}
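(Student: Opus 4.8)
The plan is to mirror the proof of the Caputo uncertainty principle in Corollary~\ref{uncercap}, simply replacing the Caputo Hardy inequality with its Hadamard counterpart from Theorem~\ref{thmharhad}. The starting point is the elementary factorization of the $L^2$-norm that manufactures the two factors $u/x$ and $xu$ appearing on the right-hand side of the claim:
\[
\|u\|_{L^2(a,b)}^2=\int_a^b |u(x)|^2\,dx=\int_a^b \frac{|u(x)|}{x}\cdot x|u(x)|\,dx.
\]

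Next I would apply H\"older's inequality with the conjugate exponents $p$ and $q=\frac{p}{p-1}$ (so that $\frac1p+\frac1q=1$) to this last integral, pairing the weight $x^{-1}$ with the $L^p$ factor and $x$ with the $L^q$ factor, which gives
\[
\|u\|_{L^2(a,b)}^2\leq \left(\int_a^b \frac{|u(x)|^p}{x^p}\,dx\right)^{\frac1p}\left(\int_a^b x^q|u(x)|^q\,dx\right)^{\frac1q}=\left\|\frac{u}{x}\right\|_{L^p(a,b)}\|xu\|_{L^q(a,b)}.
\]
Finally, I would invoke the Hadamard Hardy inequality~\eqref{hardyhad} of Theorem~\ref{thmharhad} to bound the first factor $\|u/x\|_{L^p(a,b)}$ by $\frac{a^{-1}(b-a)^{1/p}|\log\frac{b}{a}|^{\alpha-1/p}}{\left(\frac{\alpha p}{p-1}-\frac{1}{p-1}\right)^{(p-1)/p}\Gamma(\alpha)}$ times $\|\mathfrak{D}^\alpha_{a+}u\|_{L^p_{1/x}(a,b)}$; substituting this bound yields exactly the stated estimate. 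The hypotheses $a>0$, $u(a)=0$, $\mathfrak{D}^\alpha_{a+}u\in L^p_{1/x}(a,b)$ and $\alpha\in(\frac1p,1]$ are precisely those demanded by Theorem~\ref{thmharhad}, so no additional regularity is needed.

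There is essentially no serious obstacle here: the argument is a short chain combining an algebraic identity, H\"older's inequality, and an already-established Hardy inequality. The only point deserving minor care is confirming in the H\"older step that the weights cancel correctly, namely $x^{-1}\cdot x=1$, and that the conjugacy $\frac1p+\frac1q=1$ is used consistently with the definition $q=\frac{p}{p-1}$; once this is verified the multiplicative constant is inherited verbatim from~\eqref{hardyhad}, and the analogous remark for the Riemann--Liouville setting applies here as well.
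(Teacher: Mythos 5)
Your proposal is correct and coincides with the paper's argument: the paper proves this corollary by remarking that it is "similar to Corollary \ref{uncercap} with using Theorem \ref{thmharhad}," i.e.\ exactly the factorization $|u|^2=\frac{|u|}{x}\cdot x|u|$, H\"older with exponents $p$ and $q=\frac{p}{p-1}$, and the Hadamard Hardy inequality \eqref{hardyhad} that you spell out. Your write-up simply makes explicit the steps the paper leaves implicit.
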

\begin{proof}
Proof is similar to Corollary \ref{uncercap} with using Theorem \ref{thmharhad}.
\end{proof}
\subsection{Embedding of spaces}
Let us consider the space $\dot{H}^{\alpha}_{+}(a,b)$ with $\alpha\in\left(\frac{1}{2},1\right]$ introduced in \cite{Baj01, GLY15} in the following form
\begin{equation*}
    \dot{H}^{\alpha}_{+}(a,b):=\{u\in L^{2}(a,b),\,\,\partial_{a+}^{\alpha}u\in L^{2}(a,b),\,\,u(a)=0\}.
\end{equation*}
If $\alpha<\beta$, then by the Poincar\'{e}--Sobolev-type inequality \eqref{poinineq} we have $\dot{H}^{\beta}_{+}(a,b)\hookrightarrow \dot{H}^{\alpha}_{+}(a,b).$

Let us introduce the space $\dot{\mathcal{W}}^{\alpha}_{+}$ in the following form
  \begin{equation*}
  \dot{\mathcal{W}}^{\alpha}_{+}(a,b):=\{u\in L^{2}(a,b),\,\,\mathfrak{D}^{\alpha}_{a+}u\in L^{2}(a,b),\,\,u(a)=0\},
  \end{equation*}
  where $\mathfrak{D}^{\alpha}_{a+}$ is the left Hadamard derivative. If $\alpha<\beta$, then by the Poincar\'{e}--Sobolev-type inequality \eqref{poinineqhad} we have $\dot{\mathcal{W}}^{\beta}_{+}(a,b)\hookrightarrow \dot{\mathcal{W}}^{\alpha}_{+}(a,b).$
\subsection{A-priori estimate}
Here, we seek a real-valued solution to the following space-fractional diffusion problem
\begin{equation}\label{heat}
\begin{cases}
u_{t}(x,t)+D^{\alpha}_{b-}\partial_{a+}^{\alpha}u(x,t)=0,\,\,\,\,(x,t)\in (a,b)\times(0,T),\\
u(x,0)=u_{0}(x),\,\,\,\forall x\in(a,b),\\
 \end{cases}
\end{equation}
where $\alpha\in\left(\frac{1}{2},1\right]$, $u\in L^{\infty}(0,T;\dot{H}^{\alpha}_{+}(a,b))$, $u_{t}\in L^{2}(0,T;\dot{H}^{\alpha}_{+}(a,b))$ and $u_{0}\in L^{2}(a,b)$.

Now we show an a-priori estimate for this problem. Let us define
$$I(t)=\|u(x,\cdot)\|^{2}_{L^{2}(a,b)}=\int_{a}^{b}|u(x,t)|^{2}dx.$$
Then by multiplying \eqref{heat} by $u$, integrating over $(a,b)$,  and by using integration by parts, we compute
\begin{equation}\label{energy}
    \begin{split}
        \int_{a}^{b}u_{t}(x,t)u(x,t)dx&+\int_{a}^{b}u(x,t)D^{\alpha}_{b-}\partial_{a+}^{\alpha}u(x,t)dx\\&=\frac{1}{2}\frac{d}{dt}\int_{a}^{b}|u(x,t)|^{2}dx+\int_{a}^{b}|\partial_{a+}^{\alpha}u(x,t)|^{2}dx\\&
        =\frac{1}{2}\frac{dI(t)}{dt}+\int_{a}^{b}|\partial_{a+}^{\alpha}u(x,t)|^{2}dx.
    \end{split}
\end{equation}

By using \eqref{poinineq} with $p=2$ in \eqref{energy}, we get
\begin{equation*}
    \begin{split}
0=&\frac{1}{2}\frac{dI(t)}{dt}+\int_{a}^{b}|\partial_{a+}^{\alpha}u(x,t)|^{2}dx\\&\stackrel{\eqref{poinineq}}\geq\frac{1}{2}\frac{dI(t)}{dt}+\frac{\left(2\alpha -1\right)\Gamma^{2}(\alpha)}{(b-a)^{2\alpha}}\int_{a}^{b}|u(x,t)|^{2}dx.
\end{split}
\end{equation*}
Consequently, we arrive at $\frac{dI(t)}{dt}\leq 0$. This means that $I(t)$ is a non-decreasing function. Then for all $t>0$ we have
$I(t)\leq I(0)$. Thus,
$$
\|u(x,\cdot)\|_{L^{2}(a,b)}\leq \|u_{0}\|_{L^{2}(a,b)}.
$$

\end{document}